\newtheorem{theorem}{Theorem}
\newtheorem{prop}{Proposition}
\newtheorem{lem}{Lemma}
\newtheorem{rem}{Remark}{{\rm}}
\newcommand{\benum}{\begin{enumerate}}
\newcommand{\ennum}{\end{enumerate}}
\newcommand{\bs}{\begin{small}}
\newcommand{\es}{\end{small}}
\title{On a perfect isometry between principal $p$-blocks of finite groups with cyclic $p$-hyperfocal subgroups}
\date{}
\author{Hiroshi HORIMOTO and Atumi WATANABE}
\begin{document}

\pagestyle{myheadings}

\maketitle

\begin{abstract}

Let $G$ be a finite group with a Sylow $p$-subgroup $P$. 
We prove that the principal $p$-blocks of $G$ and $N_G(P)$ are perfectly isometric under the assumption $G$ has a  cyclic $p$-hyperfocal subgroup.
\end{abstract}

\section{Introduction and preliminaries}
Let $G$ be a finite group and $p$ be a prime number.  
We denote by $O^{p}(G)$ the  subgroup of $G$ 
generated by the set $G_{p'}$ of $p$-regular elements of $G$. 
Let $P$ be a Sylow $p$-subgroup of $G$ and set 
\[ \tilde{P}  = \langle \ [\ T, \ O^{p}(N_{G}(T)) \ ] \ | \ T \leq P \ \rangle. \]
We will call $\tilde{P}$ a $p$-hyperfocal subgroup of $G$.  
By Puig \cite{Pu5} and  \cite{Pu6}, 
\[  \tilde{P} = P \cap O^{p}(G) \]
(see \cite{BCGLO}, Lemma 2.2 or  \cite{Craven}, Theorem 1.33 for proofs).  
Denoting by $b(G)$ the principal block of $G$, 
 $\tilde{P}$ is a hyperfocal subgroup of $b(G)$ in the sense of Puig 
(\cite{Pu5} and \cite{Pu6}). In this paper we prove the following theorem. This study is motivated by Rouquier \cite{Rou2}, A.2. 

\begin{theorem} If $\tilde{P}$ is cyclic, then  the principal blocks $b(G)$,  $b(N_{G}(\tilde{P}))$ and $b(N_{G}(P))$ are perfectly 
 isometric in the sense of Brou\'{e} \cite {Broue}. 
\end{theorem}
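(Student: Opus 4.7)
Since perfect isometries compose, it suffices to construct two: $b(G) \sim b(H)$ with $H := N_G(\tilde{P})$, and $b(H) \sim b(N_G(P))$. First I would record the inclusions $N_G(P) \leq H \leq G$: because $O^p(G)$ is normal in $G$, the subgroup $\tilde{P} = P \cap O^p(G)$ is normal in $P$, so $P \leq H$ and $P$ is a Sylow $p$-subgroup of $H$; moreover $N_G(P)$ permutes the generating family $\{[T, O^p(N_G(T))]\}_{T \leq P}$ of $\tilde{P}$, so $N_G(P) \leq H$.

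\noindent\textbf{From $G$ to $H$.} My plan is to imitate Brou\'{e}'s construction from the cyclic defect case, but with the hyperfocal subgroup playing the role of the defect group. The subgroups of the cyclic group $\tilde{P}$ form a single chain $1 < U_1 < \cdots < U_r = \tilde{P}$; for each generator $u_i$ of $U_i$ I consider the subsection $(u_i, b_i)$ with $b_i$ the principal block of $C_G(u_i)$, and transfer via the Brauer map to a subsection $(u_i, b_i^H)$ inside $C_H(u_i) = C_G(u_i) \cap H$. Cyclicity of $\tilde{P}$ combined with Puig's source-algebra description of blocks with prescribed hyperfocal subgroup should force these local blocks to be controlled purely by the chain structure of $\tilde{P}$. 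The isometry $\mathbb{Z}\mathrm{Irr}(G, b(G)) \to \mathbb{Z}\mathrm{Irr}(H, b(H))$ is then built by matching generalized decomposition numbers along these subsections, after which Brou\'{e}'s integrality and $p$-rationality axioms must be verified.

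\noindent\textbf{From $H$ to $N_G(P)$.} Inside $H$ the subgroup $\tilde{P}$ is now normal, and because $\tilde{P}$ is cyclic the $p$-part of $\mathrm{Aut}(\tilde{P})$ is cyclic, so $H$ acts on $\tilde{P}$ through a very restricted quotient. Clifford theory applied to $1 \to \tilde{P} \to H \to H/\tilde{P} \to 1$, combined with the Fong--Reynolds reduction and the perfect-isometry theory for principal blocks with a normal cyclic $p$-subgroup (work of Puig and of the second author), should supply a perfect isometry $b(H) \sim b(N_H(P)) = b(N_G(P))$.

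\noindent\textbf{Main obstacle.} The decisive step is the first perfect isometry. Cyclicity of $\tilde{P}$ is strictly weaker than cyclicity of $P$, so Dade's cyclic defect theorem does not apply: the defect group $P$ can be arbitrary, in particular nonabelian. The technical core is to show that Puig's source-algebra description forces the centralizer blocks $b_i$ to be controlled by subgroups of $\tilde{P}$ alone, and then that the resulting bijection of subsections lifts to a perfect isometry satisfying Brou\'{e}'s numerical axioms. It is here that the cyclic hyperfocal hypothesis is used essentially.
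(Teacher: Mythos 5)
There is a genuine gap, and it sits exactly where you place your ``main obstacle'': the first perfect isometry is never actually constructed. The phrases ``should force these local blocks to be controlled purely by the chain structure of $\tilde{P}$'' and ``should supply'' stand in for the entire content of the theorem. No known form of Puig's hyperfocal source-algebra theory gives Dade-style character data for a block whose hyperfocal subgroup is cyclic but whose defect group is arbitrary; proving a statement of that kind is precisely the task. Worse, your parametrization of subsections is too small: a Brou\'{e} perfect isometry imposes conditions on \emph{all} $p$-sections, so one needs the generalized decomposition numbers at a set of representatives of all $G$-classes of $p$-elements, i.e.\ at elements $u$ ranging over the whole defect group $P$, not merely over the chain $1<U_1<\cdots<U_r=\tilde{P}$. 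For $u\in P\setminus\tilde{P}$ the centralizer $C_G(u)$ need not be $p$-nilpotent (its inertial index can equal $e$), and controlling $\chi(u\rho)$ on those sections is the technical core of the problem; restricting to subsections from $\tilde{P}$ leaves both the separation and the integrality conditions unverifiable.

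The route that works, and that the paper takes, starts from the observation you never use: since $\tilde{P}=P\cap O^p(G)$, the group $\tilde{P}$ is a Sylow $p$-subgroup of $\tilde{G}=O^p(G)$, so Dade's cyclic-defect theory applies to $b(\tilde{G})$ (not to $G$, and not via any hyperfocal analogue). Cyclicity of $\tilde{P}$ also gives control of fusion, ${\cal F}_P(G)={\cal F}_P(N_G(\tilde{P}))={\cal F}_P(N_G(P))$, because $\tilde{P}$ is strongly closed in $P$. One then determines ${\rm Irr}(b(G))$ explicitly by extending the Dade characters of $\tilde{G}$ canonically to $G$ (using that $G/\tilde{G}\cong P/\tilde{P}$ is a $p$-group and the $G$-stable generalized characters $1_G*\eta_{\hat{\nu}}$) and inducing from the stabilizers $G_{\nu}$; next one computes the generalized decomposition numbers at \emph{every} $u\in P$, splitting into the cases $e_u=1$ (where $C_G(u)$ is $p$-nilpotent) and $e_u=e$ (where one uses the cyclic-defect block of $C_{\tilde{G}}(u)$ and a Cartan-matrix argument); finally a general criterion (Theorem 2 of the paper) converts the sign-matching of these numbers with the corresponding data for $N_G(P)$ and $N_G(\tilde{P})$ directly into perfect isometries, with no composition through $N_G(\tilde{P})$, no source algebras, and no Fong--Reynolds step. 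Your second step ($b(N_G(\tilde{P}))\sim b(N_G(P))$) is the harmless part, since $N_G(\tilde{P})=O_{p'}(C_G(\tilde{P}))\,N_G(P)$; it is the passage from $G$ to a local subgroup that carries all the weight, and your proposal leaves it unproved.
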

 By \cite{sasaki}, if $p$ is odd and $P$ is metacyclic, then $\tilde{P}$ is cyclic.  If $p = 3$ and $G = SL(2, 2^{3^n}) \rtimes C_3$, then $P$ is isomorphic to the metacylic $3$-group $M_{(n + 1 )+ 1}(3)$ (cf. \cite{HKK}, (4.3)).

Now the above theorem is a generalization of \cite{JWata}, Proposition 6. If $\tilde{P} = 1$, then $G$ is $p$-nilpotent, hence the above is clear.  
We will determine the ordinary irreducible characters in $b(G)$  (Propositions 5 in $\S 2$ below), and the generalized decomposition numbers associated with $b(G)$ (Propositions 6 and 7 in $\S 3$ below), and then we obtain Theorem 1 from Theorem 2 below. 

We refer \cite{NT}, \cite{T} and \cite{Gorenstein} for the notation and terminology. Let $H$ be a subgroup of $G$. For $x \in G$, we denote by $x^H$ the $H$-conjugacy class containing $x$. For a character $\chi$ of $G$, the restriction of $\chi$ to $H$ is denoted by $\chi \!\downarrow_{H}$. For a character $\zeta$ of $H$, the induced character $\zeta$ to $G$ is denoted by $\zeta\!\uparrow_{H}^{G}$. For characters $\chi$, $\chi'$ of $G$, the innner product of $\chi$ and $\chi'$ is denoted by $(\chi, \chi')$. For a complex number $\alpha$ we denote by $\bar{\alpha}$ the complex conjugate, and for a character ${\chi}$ of $G$ we denote by $\bar{\chi}$ the contragredient character. 

 Let $({\cal K}, {\cal O}, F)$ be a $p$-modular system such that ${\cal K}$ contains the field ${\bf Q}(\sqrt[|G|]{1})$.  For a ($p$-)block $b$ of $G$, we denote by Irr$(b)$ the set of ordinary irreducible characters belonging to $b$ and IBr$(b)$ the set of irreducible Brauer characters belonging to $b$. Here $b$ denotes a block idempotent of ${\cal O}G$. We set $k(b) = |{\rm Irr}(b)|$, the cardinality of Irr$(b)$, and $l(b) = |{\rm IBr}(b)|$. Also we set ${\cal R}_{{\cal K}}(G, b) = \sum_{\chi \in {\rm Irr}(b)}\textrm{{\boldmath $Z$}}  \chi$ and we denote by $b(G)$ the principal block of $G$. We denote by $1_{G}$ the trivial irreducible character and the trivial Brauer character of $G$.

Let $(\pi, {\bf b})$ be a $b$-Brauer element, that is, $\bf b$ is a block of $C_{G}(\pi)$ associated with $b$. For $\chi \in {\rm Irr}(b)$, the class function $\chi^{(\pi, {\bf b})}$ of $G$ is defined as follows:   $\chi^{(\pi, {\bf b})}$ vanishes outside of the $p$-section of $G$ containing $\pi$ and $\chi^{(\pi, {\bf b})}(\pi \rho) = \chi(\pi\rho {\bf b})$ for $\rho \in C_{G}(\pi)_{p'}$. Let $Bs({\bf b}) = \{ \varphi^{(\pi)}_j \ | 1 \leq j \leq l({\bf b}) \}$ be a basic set for ${\bf b}$ in the sense of Brauer \cite{Brauer}, that is,  $Bs({\bf b})$ is a 
$\textrm{{\boldmath $Z$}}$-basis of the $\textrm{{\boldmath $Z$}}$-module 
\[\bigoplus_{\varphi \in {\rm IBr}({\bf b})} \textrm{{\boldmath $Z$}}  \varphi. \]
Then there exist ${d}_{\chi {\varphi}_j^{(\pi)}}^{\pi} \in {\cal O}$, what we call the generalized decomposition numbers with respect to $Bs({\bf b})$, which satisfy 
 \[ \chi^{(\pi,{\bf  b})}(\pi \rho) = \sum_{j = 1}^{l({\bf b})} 
{d}_{\chi {\varphi}_{j}^{(\pi)}}^{\pi}
{\varphi}_{j}^{(\pi)}(\rho) \ \ (\forall \rho \in C_{G}(\pi)_{p'}). \]
Set 
\[{D}^{(\pi, {\bf b})}  = \Big( {d}_{\chi {\varphi}_{j}^{(\pi)}}^{\pi}\Big)_
{\chi \in {\rm Irr}(b), \varphi_{j}^{(\pi)} \in Bs({\bf b})}. \]
By \cite{NT}, Theorem 5.4.11,  the matrix 
${^t}({D}^{(\pi, {\bf b})})\overline{D^{(\pi, {\bf b})}}$ is similar to the Cartan matrix of ${\bf  b}$ {w.r.t.} the basic set $Bs({\bf b})$, where $\overline{D^{(\pi, {\bf b})}}$ is the complex conjugate matrix of ${D}^{(\pi, {\bf b})}$ (see \cite{Navarro}, Lemma 7.5).    
The following  plays an important role in this paper.  
% Theorem 2 
\begin{theorem} Let $b$ $(resp. \  b')$ be a block of a finite group $G$ $(resp. \ 
G')$ such that $b$ and $b'$ satisfy  {\rm (i) - (iv)} below.   

{\rm (i)} $b$ and $b'$ has a common defect group $P$. Let $(P, b_P)$ $(resp. \ (P, b'_P))$ be  a maximal $b$ $resp. \ b')$-Brauer pair,  

{\rm (ii)} there exists  $\Pi \subseteq P$ such that 
$\{ (\pi, b_{\pi}) \in (P, b_P) \ | \   \pi \in \Pi \}$ $(resp. \ \{ (\pi, b'_{\pi}) \in (P, b'_P) \ |\  \pi \in \Pi \})$ is a set of representatives for the $G$ $(resp. \ G')$-conjugacy classes of $b$ $(resp. \ b')$- Brauer elements, 

{\rm (iii)} $l(b_{\pi}) = l(b'_{\pi})$ for any $\pi \in \Pi$, 

{\rm (iv)} For any $\pi \in \Pi \ \backslash \ \{ 1 \}$, there exist basic set $Bs(b_{\pi}) = \{ \varphi_{j}^{(\pi)} \ | \  j = 1, \ 2, \cdots, l(b_{\pi}) \}$ for $b_{\pi}$ and $Bs(b'_{\pi}) = \{ {\varphi'_j}^{(\pi)} \ | \  j = 1, \ 2, \cdots, l(b'_{\pi}) \}$ for $b'_{\pi}$ such that 
\begin{equation} d_{\chi_i \varphi_{j}^{(\pi)}}^{\pi} = \varepsilon_i  d_{\chi'_{i} \varphi_{j}'^{(\pi)}}^{\pi}, \ \varepsilon_i = \pm 1 \ (1 \leq i \leq k(b), \ 
1 \leq j \leq l(b_{\pi}))
 \end{equation} where {\rm Irr}$(b) = \{ \chi_i \ | \ i = 1, 2, \cdots, k(b)\}$ and {\rm Irr}$(b') = \{ \chi'_i \ | \ i = 1, 2, \cdots, k(b')\}$. Then  
\[ \chi'_i \in R_{{\cal K}}(G', b') \mapsto \varepsilon_{i} \chi_i \in R_{{\cal K}}(G, b) \]
is a perfect isometry from  $b'$ to $b$ in the sense of Brou\'{e} \cite{Broue}. 
\end{theorem}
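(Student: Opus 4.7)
The plan is to define the virtual character
\[
\mu(g,g') \;=\; \sum_{i=1}^{k(b)} \varepsilon_{i}\,\chi_i(g)\,\overline{\chi'_i(g')}
\]
on $G\times G'$ and verify the two conditions characterising a perfect isometry: (P1) $\mu(g,g')=0$ unless the $p$-parts $g_p$ and $g'_p$ are both conjugate to a common element of $\Pi$, and (P2) both $\mu(g,g')/|C_G(g)|$ and $\mu(g,g')/|C_{G'}(g')|$ lie in $\mathcal{O}$. After conjugating we may write $g=\pi\rho$, $g'=\pi'\rho'$ with $\pi,\pi'\in\Pi$, $\rho\in C_G(\pi)_{p'}$, $\rho'\in C_{G'}(\pi')_{p'}$, and expand each character via the $p$-section identity $\chi(\pi\rho)=\sum_j d^{\pi}_{\chi\,\varphi^{(\pi)}_j}\,\varphi^{(\pi)}_j(\rho)$.

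To prove (P1) when $\pi\neq\pi'$, assume without loss of generality that $\pi\neq 1$. Substituting $\varepsilon_i\,d^{\pi}_{\chi_i\,\varphi^{(\pi)}_j}=d^{\pi}_{\chi'_i\,\varphi'^{(\pi)}_j}$ from hypothesis (iv) collapses $\mu(g,g')$ to
\[
\sum_{j,k}\Bigl(\sum_i d^{\pi}_{\chi'_i\,\varphi'^{(\pi)}_j}\,\overline{d^{\pi'}_{\chi'_i\,\varphi'^{(\pi')}_k}}\Bigr)\,\varphi^{(\pi)}_j(\rho)\,\overline{\varphi'^{(\pi')}_k(\rho')},
\]
and the inner bracket is the $(j,k)$-entry of ${^t}D^{(\pi,b'_\pi)}\overline{D^{(\pi',b'_{\pi'})}}$, which vanishes by the block-diagonal structure of ${^t}D\,\overline{D}$ recalled in \cite[Thm.\,5.4.11]{NT}. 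For (P2) in the case $\pi=\pi'\neq 1$, the same substitution yields
\[
\mu(\pi\rho,\pi\rho')=\sum_{j,k}C^{(\pi)}_{jk}\,\varphi^{(\pi)}_j(\rho)\,\overline{\varphi'^{(\pi)}_k(\rho')},
\]
where $C^{(\pi)}=C(b_\pi)=C(b'_\pi)$ is the common Cartan matrix with respect to the matched basic sets. Holding $\rho'$ fixed, $\sum_k C^{(\pi)}_{jk}\overline{\varphi'^{(\pi)}_k(\rho')}$ is (the conjugate of) a projective indecomposable character value of $C_{G'}(\pi)$ at $\rho'$, hence divisible by $|C_{C_{G'}(\pi)}(\rho')|_p=|C_{G'}(g')|_p$; the symmetric computation on the $G$-side gives divisibility by $|C_G(g)|_p$.

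The main obstacle is (P2) at $\pi=\pi'=1$, where hypothesis (iv) is silent. Fix $\rho'\in G'_{p'}$ and consider $F(g):=\mu(g,\rho')=\sum_i\bigl(\varepsilon_i\,\overline{\chi'_i(\rho')}\bigr)\chi_i(g)$, an $\mathcal{O}$-linear combination of characters in ${\rm Irr}(b)$ that, by (P1) already established, vanishes on every $p$-singular element of $G$. Tensoring Brauer's classical description of virtual characters vanishing on $p$-singular elements (they form exactly the $\mathbb{Z}$-span of projective indecomposable characters) with the flat extension $\mathbb{Z}\hookrightarrow\mathcal{O}$, one concludes that $F$ is in fact an $\mathcal{O}$-linear combination of projective indecomposable characters of $G$. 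Since each such value at a $p$-regular $\rho$ is divisible by $|C_G(\rho)|_p$, this gives $\mu(\rho,\rho')/|C_G(\rho)|\in\mathcal{O}$, and the symmetric argument in $g$ yields the remaining divisibility, finishing (P2).
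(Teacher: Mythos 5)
Your overall route is the same as the paper's: expand the two characters on each $p$-section through generalized decomposition numbers, use hypothesis (iv) to move the signs from one group to the other, invoke the orthogonality/Cartan statement of \cite{NT}, Theorem 5.4.11 to kill cross-sections and to produce the (common) Cartan matrix in the matched non-trivial sections, get integrality there from divisibility of projective character values, and settle the $p$-regular/$p$-regular case by observing that $\sum_i\varepsilon_i\overline{\chi'_i(\rho')}\chi_i$ vanishes on $p$-singular elements and is therefore an $\mathcal{O}$-combination of projective indecomposable characters (the paper cites \cite{NT}, Theorems 3.6.10 and 3.6.13 for exactly this). So in substance you have reproduced the paper's proof.

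There is, however, one step that is not correct as written for the generality in which Theorem 2 is stated. Your ``$p$-section identity'' $\chi(\pi\rho)=\sum_j d^{\pi}_{\chi\,\varphi^{(\pi)}_j}\varphi^{(\pi)}_j(\rho)$ is only the contribution $\chi^{(\pi,b_\pi)}(\pi\rho)$ of the single Brauer element $(\pi,b_\pi)$; Brauer's second main theorem gives $\chi(a\rho)$ as a sum over \emph{all} blocks $e$ of $C_G(a)$ with $e^G=b$, i.e.\ over all $b$-Brauer elements with first component $a$. For a general block these need not be $G$-conjugate to a single $(\pi,b_\pi)$: several distinct elements of $\Pi$ may be $G$-conjugate to $a$ as group elements, and the case where $a$ is not conjugate into $P$ at all (so that $\chi_i(a\rho)=0$) should also be noted. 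This is precisely why the paper works with the sets $\Pi_a$, $\Pi'_{a'}$ and conjugating elements $c_{\pi a}$, $c'_{\pi a'}$, and after orthogonality ends up with a sum over $\Pi_a\cap\Pi'_{a'}$ rather than a single term. The repair is routine and does not change your computations (the cross terms between non-conjugate Brauer elements vanish by the same orthogonality), and in the paper's application to principal blocks the issue disappears by Brauer's third main theorem, but as written your expansion is false for arbitrary blocks and the bookkeeping must be added. A minor further point: with respect to a general basic set, $\sum_k C^{(\pi)}_{jk}\varphi'^{(\pi)}_k$ is an integral linear combination of projective indecomposable characters rather than a single one; the divisibility you need still holds, but the phrasing should be adjusted.
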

\begin{rem} By the condition (ii) and (iii),  $k(b) = k(b')$. 
\end{rem}
% Theorem 2 証明
\begin{proof}(Separation condition) Let $a'$ be a $p$-element of $G'$ and let $\Pi'_{a'}$ be the set of elements $\pi$ of $\Pi$ which is $G'$-conjugate to $a'$. For $\pi \in \Pi'_{a'}$, let $c'_{\pi a'}\in G'$ be such that $a'^{c'_{\pi a'}} = (a')^{c'_{\pi a'}} = \pi$. 
 Similarly for a $p$-element $a$ of $G$, we define $\Pi_a$ and $c_{\pi a}$ for $\pi \in \Pi_{a}$. Now for $\rho \in G_{p'}$ and ${\rho'} \in C_{G'}(a')_{p'}$, from (1) and by the assumption and \cite{NT}, Theorem  5.4.11 for basic sets, 
\[ \sum_{i= 1}^{k(b)} \varepsilon_{i} \chi_i (\rho)\overline{{\chi}'_i (a' {\rho}'}) = \sum_{i = 1}^{k(b)} \chi_i(\rho) \left( \sum_{\pi \in \Pi'_{a'}} \sum_{j =1}^{l(b_{\pi})} \ \overline{d^{x}_{\chi_i \varphi_{j}^{(\pi)}}}\overline{{\varphi'}_{j}^{(\pi)} ({\rho}'^{c'_{\pi a'}})}\ \right) \]
\[=  \sum_{\pi \in \Pi'_{a'}} \sum_{j = 1}^{l(b_{\pi})} \left( \sum_{i= 1}^{k(b)} \chi_i(\rho) \overline{d^{\pi}_{\chi_i \varphi_{j}^{(\pi)}}}\ \right) \overline{\varphi'^{(\pi)}_{j} ({\rho'}^{{c'}_{\pi a'}})} = 0.  \]
(see \cite{Navarro}, Lemma 7.5) Similarly for a $p$-singular element $x \in G$ and 
 $y' \in G'_{p'}$, \\ $\displaystyle\sum_{i=1}^{k(b)} \varepsilon_{i}\chi_i(x)\overline{\chi'_{i}(y')} = 0.$ 
% integrality  condition 
 (Integrality condition)  Let $\rho \in G_{p'}$ and ${\rho'}\in G'_{p'}$. 
 By the above, $\displaystyle\sum_{i= 1}^{k(b)} \varepsilon_i \overline{\chi'_i({\rho'})}\chi_i$ vanishes on the $p$-singular elements of $G$. Hence $\displaystyle\sum_{i= 1}^{k(b)}\varepsilon_i \chi_i(\rho)\overline{\chi'_i ({\rho'})}$ is divisible by $|C_{G}(\rho)|$ in  ${\cal O}$ by \cite{NT}, Theorems 3.6.10 and 3.6.13. Similarly $\displaystyle\sum_{i= 1}^{k(b)}\varepsilon_i \chi_i(\rho)\overline{\chi'_i ({\rho'})}$ is divisible by $|C_{G'}(\rho')|$. Let $a$ (resp. $a'$) be a $p$-element of $G$ (resp. $G'$). For $\rho \in C_{G}(a)_{p'}$ and $\rho'  \in C_{G'}(a')_{p'}$,  
\[\sum_{i= 1}^{k(b)}\varepsilon_i \chi_i (a\rho) \overline{\chi'_i(a'{\rho}')} \]
\[ = \sum_{i = 1}^{k(b)}\left( \sum_{ \pi \in \Pi_a } \sum_{j =1}^{l(b_{\pi})}\varepsilon_i d^{\pi}_{\chi_i \varphi_{j}^{(\pi)}} {\varphi}_{j}^{(\pi)} ({\rho}^{c_{\pi a}}) \right)\left( \sum_{\pi' \in \Pi'_{a'}} \sum_{k =1}^{l(b_{\pi'})}\overline{ d^{\pi'}_{\chi'_i  \varphi'^{(\pi')}_{k}}}\overline{{\varphi'}_{k}^{(\pi')} ({\rho}'^{c'_{\pi' a'}})} \right) \]
\[=  \sum_{\pi \in \Pi_a}\sum_{\pi' \in \Pi'_{a'}}\sum_{k =1}^{l(b_{\pi'})}\left( \sum_{j =1}^{l(b_{\pi})}\left( \sum_{i = 1}^{k(b)} \varepsilon_i d^{\pi}_{\chi_i  \varphi_{j}^{(\pi)}}
\overline{d^{\pi'}_{\chi'_i  \varphi'^{(\pi')}_{k}}}\right) \varphi^{(\pi)}_{j}({\rho}^{c_{\pi a}})\right)\overline{{\varphi'}_{k}^{(\pi')} ({\rho}'^{c'_{\pi' a'}})}.  \]
Hence from (1) and  by \cite{NT}, Theorem 5.4.11 for basic sets (see \cite{Navarro}, Lemma 7.5)
\[\sum_{i= 1}^{k(b)}\varepsilon_i \chi_i (a\rho) \overline{\chi'_i(a'{\rho}')}\]
\[ = \sum_{\pi \in \Pi_a \cap \Pi'_{a'}}
\sum_{k = 1}^{l(b_{\pi})}\left( \sum_{j= 1}^{l(b_{\pi})} 
 \left( \sum_{i = 1}^{k(b)} d^{\pi}_{\chi_i \varphi_j^{(\pi)}}
\overline{d^{\pi}_{\chi_i  \varphi_k^{(\pi)}}}\right) \varphi_j^{(\pi)}(\rho^{c_{\pi a}}) \right)\overline{ {\varphi'}^{(\pi)}_{k}(\rho'^{c'_{\pi a'}})}\]
\[= \sum_{\pi \in \Pi_a \cap \Pi'_{a'}}
\sum_{k= 1}^{l(b_{\pi})} \left( \sum_{j = 1}^{l(b_{\pi})} 
c_{\varphi_{j}^{(\pi)}\varphi_{k}^{(\pi)} } \varphi_{j}^{(\pi)}(\rho^{c_{\pi a}})
\right)\overline{\varphi'^{(\pi)}_{k}(\rho'^{c'_{\pi a'}})}\big), \]
here $c_{\varphi_j^{(\pi)} \varphi_k^{(\pi)}}$ is  a Cartan integer {w.r.t.}  the basic set $Bs(b_{\pi})$ in (iv). Thus $\displaystyle\sum_{i= 1}^{k(b)}\varepsilon_i \chi_i (a\rho)\overline{\chi'_i(a'{\rho}')}$ is divisible by $|C_{G}(a\rho)|$ in ${\cal O}$, because  $\displaystyle \sum_{j = 1}^{l(b_{\pi})} 
c_{\varphi_j^{(\pi)} \varphi_k^{(\pi)}} \varphi_j^{(\pi)}(\rho^{c_{\pi a}}) $ is divisible 
by $|C_{G}(\pi \rho^{c_{\pi a}})| = |C_{G}(a\rho)|$ in ${\cal O}$. Similarly $\displaystyle\sum_{i= 1}^{k(b)}\varepsilon_i \chi_i (a\rho)\overline{\chi'_i(a'{\rho}')}$ is by  $|C_{G'}(a'\rho')|$ in ${\cal O}$. This completes the proof. 
$\blacksquare$
\end{proof}

 Let $P$ be a Sylow $p$-subgroup of $G$ and denote by ${\cal F}_{P}(G)$ the Frobenius category of $G$. The objects of ${\cal F}_{P}(G)$ are subgroups of $P$. For $S, \ T \leq P$, \[{\rm mor}_{{\cal F}_{P}(G)}(S, \ T) = \{ \varphi \in {\rm Hom}(S, \ T) \ | \ \varphi = {c_{x}}_{|_S} \ ( \exists  x \in G) \},  \] where $c_x$ is the inner automorphism of $G$ induced by $x$.

% Prop 1

\begin{prop} {\rm (\cite{Okuyama-Watanabe})} If 
  $\tilde{P}$ is abelian, then  ${\cal F}_{P}(G) = {\cal F}_{P}(N_{G}(\tilde{P})).$ 
\end{prop}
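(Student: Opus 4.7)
The plan is to reduce the proof to a comparison of local automorphism groups via Alperin's fusion theorem, peel off the $p$-part by a Frattini decomposition, and then handle the residual $p'$-part using Burnside's fusion theorem inside $O^p(G)$.

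First I would observe that each generator $[T, O^p(N_G(T))]$ of $\tilde{P}$ is $P$-invariant (as a commutator with a $P$-invariant factor), so $\tilde{P}\triangleleft P$ and hence $P\le N_G(\tilde{P})$; this gives ${\cal F}_P(N_G(\tilde{P}))\subseteq{\cal F}_P(G)$ for free. In addition, $\tilde{P}=P\cap O^p(G)$ is strongly ${\cal F}_P(G)$-closed: if $T\le\tilde{P}$ and $T^g\le P$, then $T^g\le g\,O^p(G)\,g^{-1}\cap P=O^p(G)\cap P=\tilde{P}$. For the reverse inclusion I would invoke Alperin's fusion theorem and reduce to showing that $\mathrm{Aut}_G(Q)=\mathrm{Aut}_{N_G(\tilde{P})}(Q)$ for every subgroup $Q\le P$ in a conjugation family (e.g.\ each centric-radical $Q$).

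Then I would use the Frattini factorization $\mathrm{Aut}_G(Q)=\mathrm{Aut}_P(Q)\cdot O^p(\mathrm{Aut}_G(Q))$ together with the obvious inclusion $\mathrm{Aut}_P(Q)\le\mathrm{Aut}_{N_G(\tilde{P})}(Q)$ to reduce further to the statement $O^p(\mathrm{Aut}_G(Q))\le\mathrm{Aut}_{N_G(\tilde{P})}(Q)$. Lifting a $p'$-element $\alpha\in O^p(\mathrm{Aut}_G(Q))$ to a $p'$-element $g\in O^p(N_G(Q))\subseteq O^p(G)$, Puig's defining formula yields $[Q,g]\le\tilde{P}$; writing $Q_0:=Q\cap\tilde{P}=Q\cap O^p(G)$, this says $[Q,g]\le Q_0$, and since $g\in O^p(G)$ normalizes $Q$, also $gQ_0g^{-1}\le Q\cap O^p(G)=Q_0$, so $g$ normalizes $Q_0$ and acts trivially on $Q/Q_0$. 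The task becomes exhibiting $c\in C_G(Q)$ with $cg\in N_G(\tilde{P})$. Since $\tilde{P}$ is abelian, $\tilde{P}\le C_{O^p(G)}(Q_0)$, and a short Sylow count shows $\tilde{P}$ is in fact a Sylow $p$-subgroup of $C_{O^p(G)}(Q_0)$; the same then holds for $g\tilde{P}g^{-1}$. Sylow's theorem in $C_{O^p(G)}(Q_0)$, equivalently Burnside's fusion theorem applied to the abelian Sylow $\tilde{P}$ of $O^p(G)$, then yields $c_0\in C_{O^p(G)}(Q_0)$ with $gc_0\in N_G(\tilde{P})$.

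The main technical obstacle, and the step where the full strength of the abelian hypothesis is used, is upgrading this correction from $C_G(Q_0)$ to $C_G(Q)$: the element $c_0$ a priori only centralizes $Q_0$, not the larger group $Q$, so $gc_0$ may fail to normalize $Q$. I expect to handle this by repeating the Sylow argument inside the smaller group $C_{O^p(G)}(Q)$, using that for fully centralized $Q$ the intersection $C_{\tilde{P}}(Q)=C_P(Q)\cap O^p(G)$ is a Sylow $p$-subgroup of $C_{O^p(G)}(Q)$, and exploiting both the triviality of the $g$-action on $Q/Q_0$ and the abelianness of $\tilde{P}$ to refine the previous Sylow conjugation into one lying in $C_G(Q)$; abelianness of $\tilde{P}$ is precisely what lets centralizer control propagate from $Q_0$ to $Q$ without losing control of $\tilde{P}$ itself.
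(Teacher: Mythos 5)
The paper's proof of this proposition is a one-line citation: having observed that $\tilde{P} = P \cap O^p(G)$ is strongly closed in $P$, it invokes \cite{GLS}, Proposition 16.20 (a Goldschmidt-type theorem: an abelian strongly closed subgroup $A$ of a Sylow $p$-subgroup forces $N_G(A)$ to control $G$-fusion in $P$). You instead set out to reprove that citation from scratch, and you run into the exact obstruction that makes the cited result a theorem rather than an exercise.

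Your setup through the reduction is fine: $\tilde P\triangleleft P$ gives the inclusion $\mathcal{F}_P(N_G(\tilde P))\subseteq\mathcal{F}_P(G)$; strong closure of $\tilde P$ is correct; the Alperin--plus--Frattini reduction to showing $O^p(\mathrm{Aut}_G(Q))\le \mathrm{Aut}_{N_G(\tilde P)}(Q)$ is standard; lifting to a $p'$-element $g\in O^p(G)\cap N_G(Q)$, deducing $[Q,g]\le Q_0:=Q\cap\tilde P$, and the Sylow argument in $C_{O^p(G)}(Q_0)$ (using that the abelian $\tilde P$ lies in, hence is Sylow in, $C_{O^p(G)}(Q_0)$) all check out. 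The genuine gap is precisely the one you flag: you only obtain $c_0\in C_{O^p(G)}(Q_0)$ with $c_0g\in N_G(\tilde P)$, and you need $c_0\in C_G(Q)$ so that $c_0g$ induces the same automorphism of $Q$ as $g$. Your proposed repair --- ``repeat the Sylow argument inside $C_{O^p(G)}(Q)$'' --- does not close it. The Sylow subgroup of $C_{O^p(G)}(Q)$ is $C_{\tilde P}(Q)$, not $\tilde P$, and $g\tilde Pg^{-1}$ is generally not contained in $C_{O^p(G)}(Q)$; so a Sylow conjugation there produces $c_1\in C_{O^p(G)}(Q)$ with $c_1g$ normalizing $C_{\tilde P}(Q)$, which says nothing about $c_1g$ normalizing $\tilde P$. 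Neither ``triviality of the $g$-action on $Q/Q_0$'' nor abelianness of $\tilde P$ lets you interpolate between these two Sylow statements. In fact this step is exactly where the real content of \cite{GLS}, Proposition 16.20 (Goldschmidt's theorem on abelian strongly closed subgroups, and its odd-prime analogue) lives; it is not obtainable by Sylow counting in centralizers, and the correct move here is to cite that result, as the paper does, rather than attempt an elementary derivation.
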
 
\begin{proof} (Naoki Chigira)
$\tilde{P} = P \cap O^{p}(G)$ is strongly closed in $P$ in the sense of \cite{GLS}, \S 16. Hence the proposition follows from  \cite{GLS}, Proposition 16.20. $\blacksquare$
\end{proof}

% Prop 2 
\begin{prop} {\rm(\cite{Wata2014}, Theorem 3)} If $\tilde{P}$ is cyclic, then 
 ${\cal F}_{P}(G) = {\cal F}_{P}(N_{G}(P)).$  
\end{prop}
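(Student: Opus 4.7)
The plan is a double reduction. First, since $\tilde P$ is cyclic, hence abelian, the preceding proposition (Okuyama--Watanabe) yields $\cF_P(G) = \cF_P(N_G(\tilde P))$; I may therefore assume $\tilde P \lhd G$ and aim to prove $\cF_P(G) = \cF_P(N_G(P))$ in this setting.

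Set $\bar G = G/\tilde P$ and $\bar P = P/\tilde P$. Because $\tilde P \le O^p(G)$,
\[ \bar P \cap O^p(\bar G) = (P \cap O^p(G))/\tilde P = \tilde P/\tilde P = 1, \]
so the hyperfocal subgroup of $\bar G$ is trivial, i.e.\ $\bar G$ is $p$-nilpotent. By Frobenius' criterion, $N_{\bar G}(\bar Q)/C_{\bar G}(\bar Q)$ is a $p$-group for every non-trivial $\bar Q \le \bar P$, and Alperin's fusion theorem then implies that $N_{\bar G}(\bar P)$ controls fusion in $\bar P$. Since the preimage of $N_{\bar G}(\bar P)$ under $G \to \bar G$ is $N_G(P\tilde P) = N_G(P)$, for any $g \in G$ with $S, S^g \le P$ there is $n \in N_G(P)$ such that $c_g$ and $c_n$ agree on $S$ modulo $\tilde P$.

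Let $h$ be the residual element, so that $[S, h] \le \tilde P$ and $c_g|_S$ factors through $c_n$ and $c_h$. The problem then reduces to writing $h = m \cdot c$ with $m \in N_G(P)$ and $c \in C_G(S)$: once that is done, $c_{h}|_S = c_{m}|_S$ and consequently $c_g|_S$ is realized by an element of $N_G(P)$.

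This last factorization is the main obstacle, and here cyclicity of $\tilde P$ is essential. It forces $\mathrm{Aut}(\tilde P)$ to be abelian, so $G/C_G(\tilde P)$ is abelian, and every subgroup of $\tilde P$ is characteristic, hence $G$-invariant. A Frattini-type argument applied to $C_G(\tilde P) \lhd G$, whose Sylow $p$-subgroup is $C_P(\tilde P)$, should yield $G = C_G(\tilde P) \cdot N_G(P)$, producing $m \in N_G(P)$ that induces the same automorphism of $\tilde P$ as $h$ does; a refinement exploiting the totally ordered subgroup lattice of cyclic $\tilde P$ together with the constraint $[S, h] \le \tilde P$ should then arrange $m^{-1} h \in C_G(S)$. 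Without cyclicity, non-abelian $\mathrm{Aut}(\tilde P)$ would permit Alperin-essential subgroups that obstruct this direct absorption.
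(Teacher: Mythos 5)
Your opening moves are consistent with the paper: the reduction via the Okuyama--Watanabe proposition to $G=N_G(\tilde{P})$ is exactly the paper's first step, and the observation that $G/\tilde{P}$ is then $p$-nilpotent, so that fusion modulo $\tilde{P}$ is realized in $N_G(P)$ (the preimage of $N_{\bar G}(\bar P)$), leaving a residual element $h$ with $[S,h]\le\tilde{P}$, is correct in outline. Two caveats already here: after replacing $G$ by $N_G(\tilde{P})$ you silently use $P\cap O^{p}(G)=\tilde{P}$ for the new $G$; this is true, but it needs a justification (e.g.\ the hyperfocal subgroup $P\cap O^p(-)$ is determined by the fusion system, which Proposition 1 says is unchanged), and (minor) to conclude $c_h|_S=c_m|_S$ you need $h=cm$ with $c\in C_G(S)$, not $h=mc$. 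The genuine gap is your last paragraph, which is where the whole content of the proposition sits and which is only asserted. The Frattini argument applied to $C_G(\tilde{P})$ with Sylow subgroup $C_P(\tilde{P})$ gives $G=C_G(\tilde{P})N_G(C_P(\tilde{P}))$, not $G=C_G(\tilde{P})N_G(P)$; the correct way to get the latter is to apply Frattini to $PC_G(\tilde{P})$, which is normal because $G/C_G(\tilde{P})$ is abelian (this is the paper's equation (2), $N_G(\tilde{P})=C_G(\tilde{P})N_G(P)$). But even granting that factorization, writing $h=cm$ with $c\in C_G(\tilde{P})$, $m\in N_G(P)$ does not finish: $c$ centralizes $\tilde{P}$, not $S$, and its $p$-part need not even lie in $P$; the appeal to the totally ordered subgroup lattice of $\tilde{P}$ together with $[S,h]\le\tilde{P}$ is not an argument and I do not see how it would produce $m^{-1}h\in C_G(S)$.

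What actually closes this point in the paper is the $p'$-structure of $C_G(\tilde{P})$, which your sketch never uses: $\tilde{P}=P\cap O^p(G)$ is an abelian Sylow $p$-subgroup of $\tilde{G}=O^p(G)$, so $C_{\tilde{G}}(\tilde{P})=\tilde{K}\times\tilde{P}$ with $\tilde{K}=O_{p'}(C_{\tilde{G}}(\tilde{P}))$, and $\tilde{K}$ is normal in $N_G(\tilde{P})$; since $G/\tilde{G}$ is a $p$-group, $C_G(\tilde{P})=\tilde{K}C_P(\tilde{P})$, whence $N_G(\tilde{P})=\tilde{K}N_G(P)$ with $\tilde{K}$ a normal $p'$-subgroup. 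Then fusion is immediate: if $g=kn$ with $k\in\tilde{K}$, $n\in N_G(P)$ and $S,S^g\le P$, then for $s\in S$ the commutator $[s,k]=s^{-1}s^{k}$ lies in $\tilde{K}$ (normality) and in $P$ (because $s^{k}=(s^{g})^{n^{-1}}\in P$), hence is trivial, so $c_g|_S=c_n|_S$ and $\cF_P(N_G(\tilde{P}))=\cF_P(N_G(P))$; Proposition 1 then gives the statement. Note that once this factorization with a \emph{normal $p'$-group} $\tilde{K}$ is in hand, your entire detour through $G/\tilde{P}$, Frobenius' criterion and the residual element $h$ is unnecessary; conversely, without it (or an equivalent use of Burnside's normal $p$-complement structure of $C_{\tilde{G}}(\tilde{P})$), your final absorption step remains unproved, so as written the proposal does not constitute a proof.
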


\begin{proof} Set $\tilde{K} = O_{p'}(C_{{G}}(\tilde{P})) = O_{p'}(C_{\tilde{G}}(\tilde{P})) = C_{\tilde{G}}(\tilde{P})_{p'}$. Then $C_{\tilde{G}}(\tilde{P}) =  \tilde{K} \times \tilde{P}$, and $\tilde{K}$ is normal in $N_{G}(\tilde{P})$. 
Since $C_{P}(\tilde{P})$ is a Sylow $p$-subgroup of $C_{G}(\tilde{P})$, 
$C_{G}(\tilde{P}) = C_{\tilde{G}}(\tilde{P}) C_{P}(\tilde{P})  = \tilde{K} C_{P}(\tilde{P})$, by Proposition 1,  
%(2)
\begin{equation}
N_{G}(\tilde{P}) = C_{G}(\tilde{P})N_{G}(P) = \tilde{K} N_{G}({P}). 
\end{equation}
This implies 
\[{\cal F}_{P}(N_{G}(\tilde{P})) = {\cal F}_{P}(N_{G}(P)). \]
Note that $[P, K]  \cap P = 1$. This and Proposition 1 complete the proof. $\blacksquare$
 \end{proof} 

%remark 
\begin{rem} The above two propositions are generalized in  \cite{Wata2014}) 
as follows. Let  $b$ be a block (not necessarily the principal block) of $G$ with a maximal $b$-Brauer pair $(P, b_P)$ and a hyperfocal subgroup $\tilde{P}$. Let $b' \in Bl(N_{G}(P))$ be the 
Brauer correspondent of $b$. If $\tilde{P}$ is cyclic, then the Brauer categories ${\cal F}_{(P, b_P)}(G, b)$ and $ {\cal F}_{(P, b_P)}(N_{G}(P),  b')$ are same. 
\end{rem} 

Set $K = O_{p'}(C_{G}(P))$ and let ${T}$ be a $p$-complement of $N_{G}(P)$. Then $K = T \cap C_{G}(P)$. 
From (2), $\tilde{T} : = \tilde{K}T$ is a $p$-complement of $N_{\tilde{G}}(\tilde{P})$. Then 
\begin{equation}
\tilde{T}/\tilde{K} \cong T/K
\end{equation}
because $T \cap C_{G}(\tilde{P}) \subseteq  C_{G}(P)$. 

In the rest of this paper $G$ is a finite group  and $P$ is a Sylow $p$-subgroup of $G$,  $\tilde{G} = O^{p}(G), \tilde{P} = P \cap O^{p}(G)$ and 
$N =  N_{G}(P)$.  Moreover $e = |E|$, the inertial index of $b(G)$. 
 We assume that $\tilde{P}$ is a cyclic group generated by $x$ and  of order $p^{n}$. 
From (3), $e$ is the inertial index of $b(\tilde{G})$ and hence $e$ divides $p-1$.

If $e = 1$, then $G$ is $p$-nilpotent. Hence we may assume $e \neq 1$ and hence $p \neq 2$ in a proof of Theorem 1. For a $p$-element $u \in G$, we denote by $e_{u}$ 
the inertial index of $b(C_{G}(u))$ and we call it the inertial index of $u$. 
 
% 2
\section{Irreducible characters in  $b(G)$}

% Lemma 1
\begin{lem}  Under our assumption, let $u \in P$. Then $P_u = C_{P}(u)$ is a Sylow $p$-subgroup of $C_{G}(u)$ and $e_u = |C_{N}(u) : P_u C_{N}(P_u)|$.  
\end{lem}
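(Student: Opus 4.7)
The plan is to derive both statements from Proposition~2, which gives $\mathcal{F}_P(G)=\mathcal{F}_P(N)$ under our cyclic hyperfocal hypothesis. For the first assertion, I would let $Q$ be a Sylow $p$-subgroup of $C_G(u)$ containing $P_u=C_P(u)$. By Sylow's theorem $Q\leq P^g$ for some $g\in G$; setting $v=u^{g^{-1}}\in P$, conjugation by $g^{-1}$ carries $Q$ into $P$, so $Q^{g^{-1}}\leq C_P(v)$ and hence $|Q|\leq |C_P(v)|$. The $G$-conjugation $u\mapsto v$ between elements of $P$ is a morphism in $\mathcal{F}_P(G)$, so by Proposition~2 it is induced by some $n\in N$. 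Since $n$ normalizes $P$, $C_P(v)=C_P(u^n)=C_P(u)^n=P_u^n$, giving $|C_P(v)|=|P_u|$. Combined with $P_u\leq Q$ this forces $Q=P_u$.

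For the second assertion, $e_u$ is by definition the inertial index of the principal block of $C_G(u)$, whose defect group is $P_u$ by the previous step; hence
\[ e_u = |N_{C_G(u)}(P_u) : P_u\, C_{C_G(u)}(P_u)|. \]
Because $u\in Z(P_u)$, $C_G(P_u)\leq C_G(u)$, so $C_{C_G(u)}(P_u)=C_G(P_u)$. To rewrite the numerator, I would take any $g\in N_{C_G(u)}(P_u)$: Proposition~2 applied to the $G$-automorphism $c_g|_{P_u}$ of $P_u$ furnishes $n\in N$ with $c_n|_{P_u}=c_g|_{P_u}$; this $n$ then normalizes $P_u$ and centralizes $u$ (forced by the equality of actions on $P_u\ni u$), so $n\in C_N(u)$, while $gn^{-1}\in C_G(P_u)$. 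Thus
\[ N_{C_G(u)}(P_u) = C_N(u)\,C_G(P_u), \]
and the standard isomorphism theorem yields $e_u=|C_N(u):C_N(u)\cap P_uC_G(P_u)|$.

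The concluding step is to show $C_N(u)\cap P_uC_G(P_u)=P_uC_N(P_u)$. The inclusion $\supseteq$ is immediate since $P_u\leq C_N(u)$ (as $P_u=C_P(u)$) and $C_N(P_u)\leq C_N(u)$ (as $u\in P_u$). For $\subseteq$, any element $h$ of the intersection may be written $h=pk$ with $p\in P_u$ and $k\in C_G(P_u)$, and since $p,h\in N$ we get $k=p^{-1}h\in N\cap C_G(P_u)=C_N(P_u)$. I do not foresee a serious obstacle here; the only delicate point is applying Proposition~2 in the correct form in each half of the argument (as a fusion identity on the singletons $\langle u\rangle\to\langle v\rangle$ for the first assertion, and as a local-automorphism identity on $P_u$ for the second).
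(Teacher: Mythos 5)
Your proposal is correct and follows essentially the same route as the paper: for the first assertion both proofs use Proposition~2 to replace a $G$-conjugation by an $N$-conjugation so that $P_u$ is carried isomorphically onto a Sylow $p$-subgroup of $C_G(u)$, and for the second assertion both use Proposition~2 to write $N_{C_G(u)}(P_u)$ (or $N_G(P_u)$) as $C_G(P_u)$ times a subgroup of $N$, then apply Dedekind's law and the isomorphism theorems to land on $|C_N(u):P_uC_N(P_u)|$. The only cosmetic difference is that the paper first rewrites $N_G(P_u)=C_G(P_u)(N_G(P_u)\cap N)$ and intersects with $C_G(u)$, whereas you work directly inside $N_{C_G(u)}(P_u)$; both are the same computation.

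One small gap worth mentioning: you assert $N_{C_G(u)}(P_u)=C_N(u)\,C_G(P_u)$, and your Proposition~2 argument only gives the inclusion $\subseteq$. The reverse inclusion needs the observation that every $n\in C_N(u)$ does normalize $P_u$, which holds because $n\in N$ normalizes $P$ and centralizes $u$, so $P_u^n=C_{P^n}(u^n)=C_P(u)=P_u$. You use this same fact implicitly at the end when invoking $P_u\le C_N(u)$ and $C_N(P_u)\le C_N(u)$, so it would be worth recording it explicitly once.
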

\begin{proof} There exists $a \in G$ such that $P \cap C_{G}(u^a)$ is a Sylow $p$-subgroup of $C_{G}(u^a)$. 
 Then $u^a \in P \cap C_{G}(u^a) \subseteq P$. By Proposition 2, there exists $n \in N$ such that $u^a = u^n$. Since 
 $P \cap C_{G}(u)^n$ is a Sylow $p$-subgroup of $C_{G}(u)^n $,  $P_u = P \cap C_{G}(u) = P^{n^{-1}} \cap C_{G}(u) $ is a 
Sylow $p$-subgroup of $C_{G}(u)$. 

By Proposition 2, we have  $N_{G}(P_u) = C_{G}(P_u)(N_{G}(P_u) \cap N)$, hence  \begin{eqnarray*}
C_{G}(u) \cap N_{G}(P_u) &=& C_{G}(P_u)(C_{G}(u) \cap N_{G}(P_u) \cap N)\\
 &=&  C_{G}(P_u)(C_{G}(u) \cap N). 
\end{eqnarray*}
On the other hand we have  $C_{G}(u) \cap C_{G}(P_u)P_u = C_{G}(P_u)P_u$. Therefore 
\begin{eqnarray*}
 e_u &=& |C_{G}(u) \cap N_{G}(P_u) : (C_{G}(u) \cap C_{G}(P_u))P_u|\\
&=& |C_{G}(P_u)(C_{G}(u) \cap N) :  C_{G}(P_u)P_u|\\
&=&|C_{G}(u) \cap N : C_{G}(u) \cap N \cap C_{G}(P_u)P_u| \\
&=& |C_{N}(u) :  N \cap C_{G}(P_u)P_u| \\
&=& |C_{N}(u) : C_{N}(P_u)P_u|. 
\end{eqnarray*}$\blacksquare$ \end{proof}

\bigskip

The following is special case of  \cite{Wata2014} Lemma 4 (i).  
For the self-containedness we state it  with a proof because this is crucial.  
%Lemma 2
\begin{lem} {\rm(\cite{Wata2014} Lemma 4 (i))} Suppose that $\tilde{P} \neq 1$. Let $T$ be a $p$-complement of 
$N$ and let $X$ be a subgroup of $T$ 
containing $O_{p'}(C_{G}(P))$ properly. Then 
$P = \tilde{P}\rtimes C_{P}(X). $ In particular $C_{P}(X) = C_{P}(T)$ and $G = \tilde{G}\rtimes C_{P}(T)$.  
\end{lem}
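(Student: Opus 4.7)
The approach is to exploit coprime action of the $p'$-group $X$ on the $p$-group $P$, with the $X$-invariant normal subgroup $\tilde{P}$ playing the distinguished role. I will need two structural inputs: $[P, X] \leq \tilde{P}$ and $C_{\tilde{P}}(X) = 1$. Once these are in hand, the coprime formula $P = [P, X]\, C_P(X)$ forces $P = \tilde{P}\, C_P(X)$, and the intersection condition $\tilde{P} \cap C_P(X) = 1$ upgrades this to the advertised semidirect decomposition. The first input is essentially by definition: since $T$ is a $p'$-subgroup of $N$ we have $T \leq O^p(N) = O^p(N_G(P))$, so the hyperfocal definition (applied with the subgroup $P$ itself) gives $[P, X] \leq [P, O^p(N_G(P))] \leq \tilde{P}$.

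For $C_{\tilde{P}}(X) = 1$ I first verify that $C_T(\tilde{P}) = K$. Clearly $K \subseteq C_T(\tilde{P})$. Conversely, for $t \in C_T(\tilde{P})$, the inclusion $[P, t] \leq \tilde{P}$ gives $[P, t, t] \leq [\tilde{P}, t] = 1$, and the coprime-action identity $[P, t] = [P, t, t]$ then yields $t \in C_T(P) = K$. Hence the hypothesis $X \supsetneq K$ forces $X/K$ to embed nontrivially in $\mathrm{Aut}(\tilde{P}) \cong (\mathbb{Z}/p^n)^{\times}$. Any nontrivial element $x \in X/K$ is a $p'$-element of $(\mathbb{Z}/p^n)^{\times}$, and therefore acts on $\tilde{P}$ as multiplication by some $a$ with $a \not\equiv 1 \pmod{p}$ (the kernel of the reduction $(\mathbb{Z}/p^n)^{\times} \to \mathbb{F}_p^{\times}$ is the Sylow $p$-subgroup and contains no nontrivial $p'$-element). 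Then $a - 1$ is a unit in $\mathbb{Z}/p^n$, so $(a - 1) y \equiv 0$ has only the solution $y = 0$; this gives $C_{\tilde{P}}(x) = 1$, hence $C_{\tilde{P}}(X) = 1$ and $P = \tilde{P} \rtimes C_P(X)$.

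The two ``in particular'' assertions are formal consequences. Applying the decomposition with $X = T$ (admissible because $e = |T/K| \ne 1$) gives $|C_P(T)| = |P|/|\tilde{P}| = |C_P(X)|$, which combined with $C_P(T) \leq C_P(X)$ forces equality. Next, comparison of $p$-parts gives $|G/\tilde{G}| = |P/\tilde{P}|$, so $G = P\tilde{G}$; combined with $P = \tilde{P}\, C_P(T)$ and $\tilde{P} \leq \tilde{G}$, this yields $G = C_P(T)\, \tilde{G}$, and $C_P(T) \cap \tilde{G} \leq C_P(T) \cap \tilde{P} = 1$ finishes $G = \tilde{G} \rtimes C_P(T)$. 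I expect the main obstacle to lie in the fixed-point computation $C_{\tilde{P}}(X) = 1$: this is precisely where cyclicity of the hyperfocal subgroup is used essentially, since for a non-cyclic $\tilde{P}$ a nontrivial $p'$-automorphism may have a nontrivial fixed subgroup and the clean semidirect decomposition would break down.
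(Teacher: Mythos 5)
Your proof is correct, and it reaches the conclusion by a somewhat different route than the paper. Both arguments rest on the same two inputs: the hyperfocal containment $[P,X]\subseteq\tilde{P}$, and the fact that $X/O_{p'}(C_{G}(P))$ acts faithfully, hence fixed-point-freely, on the cyclic group $\tilde{P}$. The paper then argues coset by coset: since $[u,X]\subseteq\tilde{P}$, each coset $\tilde{P}u$ is $X$-invariant and $\tilde{P}$ acts transitively on it, so Glauberman's lemma (\cite{Isaacs}, 13.8) produces an $X$-fixed element in every coset, and fixed-point-freeness on $\tilde{P}\setminus\{1\}$ makes that element unique; this gives the complement $C_{P}(X)$ in one stroke. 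You instead invoke the coprime-action decomposition $P=[P,X]C_{P}(X)$ to get $P=\tilde{P}C_{P}(X)$, and verify $\tilde{P}\cap C_{P}(X)=C_{\tilde{P}}(X)=1$ separately; since that decomposition is itself proved by the same kind of fixed-point counting, the difference is one of packaging rather than substance, with your version being the more routine textbook bookkeeping and the paper's giving existence and uniqueness of the fixed coset representatives simultaneously. Two points in your write-up are genuinely more explicit than the paper: you prove $C_{T}(\tilde{P})=O_{p'}(C_{G}(P))$ (via $[P,t]=[P,t,t]$), which the paper uses only implicitly in justifying (3), and you spell out the computation in ${\rm Aut}(\tilde{P})$ behind fixed-point-freeness, which the paper attributes to (2), (3) and cyclicity without detail. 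Your derivation of the ``in particular'' statements is also fine: applying the decomposition to $X=T$ is legitimate because the existence of $X$ already forces $T\supsetneq O_{p'}(C_{G}(P))$, and the order comparison then gives $C_{P}(X)=C_{P}(T)$, while $G=\tilde{G}P$ together with $C_{P}(T)\cap\tilde{G}\subseteq\tilde{P}\cap C_{P}(T)=1$ gives $G=\tilde{G}\rtimes C_{P}(T)$, exactly as the paper asserts.
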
 
\begin{proof}Let $u \in P$. Since $\tilde{P}$ is a $p$-hyperfocal subgroup of $G$, $[u, X] \subseteq [P, X] \subseteq \tilde{P}$. Hence 
$\tilde{P}u$ is $X$-invariant. On the other hand $\tilde{P}$ acts transitively on 
$u\tilde{P}u$ by right  multiplication. By using a lemma of Glauberman (\cite{Isaacs}, 13.8)
 there is an element of $\tilde{P}u$ fixed by $X$. Since $\tilde{P}$ is cyclic, from (2) and (3), $X/O_{p'}(C_{G}(P))$ acts on $\tilde{P} \backslash \{ 1\}$ fixed-point freely, an element of $\tilde{P}u$ fixed by  $X$ is unique. This implies 
$P = \tilde{P}\rtimes C_{P}(X)$. Since $G = \tilde{G}P$, we have $G = \tilde{G} \rtimes C_{P}(T)$. $\blacksquare$ \end{proof}  

%Lemma 3 
\begin{lem} For $u \in P$, $e_{u} = e$ or $e_{u} = 1$. 
If $e_{u} = e$, then $C_{N}(u) = P_u T$ for some $p$-complement of $N$ and $C_{\tilde{P}}(u)$ is a $p$-hyperfocal subgroup of $C_{G}(u)$. If $e_{u} = 1$, then $C_{N}(u) = P_u O_{p'}(C_{G}(P))$. 
\end{lem}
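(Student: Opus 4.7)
My plan is to compute $e_u$ via Lemma~1 after selecting a $p$-complement $T$ of $N$ adapted to $u$, and then to run the case analysis given by Lemma~2. By Schur--Zassenhaus I may enlarge a $p$-complement of $C_N(u)$ to a $p$-complement $T$ of $N$, so that $C_T(u)$ is a $p$-complement of $C_N(u)$. Each element of $C_N(u)$ normalises $P$ and centralises $u$, hence normalises $P_u = C_P(u)$; thus $P_u \triangleleft C_N(u)$, so $C_N(u) = P_u \rtimes C_T(u)$ and $P_u$ is the unique Sylow $p$-subgroup of $C_N(u)$, which forces $C_P(P_u) \subseteq P_u$ and hence $C_P(P_u) = Z(P_u)$. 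Projecting $C_N(u)$ onto $C_T(u)$ with kernel $P_u$, an element $pt \in C_N(u)$ lies in $C_N(P_u)$ iff the automorphism of $P_u$ induced by $t$ coincides with the inner automorphism induced by $p$; the image of $C_T(u)$ in $\mathrm{Aut}(P_u)$ is a $p'$-group while $\mathrm{Inn}(P_u)$ is a $p$-group, so this forces $t \in C_T(P_u)$. Therefore $P_u C_N(P_u) = P_u \rtimes C_T(P_u)$ and Lemma~1 yields $e_u = |C_T(u) : C_T(P_u)|$.

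By Lemma~2, $P = \tilde{P} \rtimes C_P(T)$, so $u$ decomposes uniquely as $u = \tilde{u}v$ with $\tilde{u} \in \tilde{P}$, $v \in C_P(T)$; since $v^t = v$ for $t \in T$ one checks at once that $C_T(u) = C_T(\tilde{u})$. The proof of Lemma~2 also shows that $T/K$ acts freely on $\tilde{P} \setminus \{1\}$, where $K = O_{p'}(C_G(P))$. If $\tilde{u} \neq 1$, then $C_T(u) = K$, and since $K$ centralises $P$ we get $C_T(P_u) = K$, hence $e_u = 1$ and $C_N(u) = P_u K = P_u\,O_{p'}(C_G(P))$. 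If $\tilde{u} = 1$, then $u = v \in C_P(T)$, so $C_T(u) = T$ and $C_N(u) = P_u T$; furthermore $v \in P$ acts on the cyclic normal subgroup $\tilde{P}$ by conjugation as a $p$-element of $\mathrm{Aut}(\tilde{P}) \cong (\mathbb{Z}/p^n\mathbb{Z})^{*}$, which is $\equiv 1 \pmod p$ and therefore fixes the unique subgroup of order $p$ in $\tilde{P}$; so $C_{\tilde{P}}(u) = C_{\tilde{P}}(v) \neq 1$. Picking $1 \neq \tilde{y} \in C_{\tilde{P}}(u) \subseteq P_u$ and invoking the free action, $C_T(P_u) \subseteq C_T(\tilde{y}) = K$, whence $C_T(P_u) = K$ and $e_u = |T/K| = e$.

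For the hyperfocal assertion when $e_u = e$, observe that $T/K$ is cyclic and embeds into $\mathrm{Aut}(\tilde{P})$ as the group generated by multiplication by a primitive $e$-th root of unity $a$ modulo $p^n$ with $a \not\equiv 1 \pmod p$ (because $e \mid p-1$ and $e > 1$), so $a - 1$ is a unit modulo $p^n$ and $[\tilde{P}, T] = \tilde{P}$; together with $[P, T] \subseteq \tilde{P}$ (which follows from $T \subseteq O^p(N)$ and the definition of $\tilde{P}$), this yields $\tilde{P} = [P, T]$. The same commutator calculation applied to the $T$-invariant cyclic subgroup $C_{\tilde{P}}(u)$ gives $C_{\tilde{P}}(u) = [C_{\tilde{P}}(u), T] \subseteq [P_u, T]$, while $[P_u, T] \subseteq [P, T] \cap P_u = \tilde{P} \cap P_u = C_{\tilde{P}}(u)$, so $C_{\tilde{P}}(u) = [P_u, T]$. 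Since $T \subseteq C_G(u)$ consists of $p'$-elements, $[P_u, T] \subseteq O^p(C_G(u)) \cap P_u$, and conversely $O^p(C_G(u)) \subseteq O^p(G)$ forces $O^p(C_G(u)) \cap P_u \subseteq \tilde{P} \cap P_u = C_{\tilde{P}}(u)$; therefore $C_{\tilde{P}}(u) = P_u \cap O^p(C_G(u))$ is the $p$-hyperfocal subgroup of $C_G(u)$. I expect this last identification to be the main obstacle, because it forces both the identity $\tilde{P} = [P, T]$ from the cyclic/fixed-point-free hypothesis and the two-sided sandwich of $O^p(C_G(u)) \cap P_u$ between $[P_u, T]$ and $\tilde{P} \cap P_u$; everything else reduces to routine casework once $C_T(u)$ is arranged to be a $p$-complement of $C_N(u)$.
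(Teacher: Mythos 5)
Your proof is correct and takes essentially the same route as the paper: Schur--Zassenhaus reduces $e_u$ to $|C_T(u):C_T(P_u)|$, Lemma 2 plus the fixed-point-free action of $T/K$ on $\tilde{P}\setminus\{1\}$ yields the dichotomy (you split via $u=\tilde{u}v$ in $P=\tilde{P}\rtimes C_P(T)$, where the paper instead applies Lemma 2 with $X=C_T(u)$ to get $u\in C_P(T)$ --- a cosmetic difference), and the hyperfocal statement rests on $[T,C_{\tilde{P}}(u)]=C_{\tilde{P}}(u)$. Your explicit sandwich $C_{\tilde{P}}(u)=[P_u,T]\subseteq P_u\cap O^p(C_G(u))\subseteq \tilde{P}\cap P_u$ merely spells out in full the identification the paper dispatches with the one-line remark ``because $[C_T(u),C_{\tilde{P}}(u)]=C_{\tilde{P}}(u)$''.
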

\begin{proof} By Lemma 2, $P_u = C_{P}(u)$ is a Sylow $p$-subgroup of $C_{G}(u)$. By using the Schur-Zassenhaus theorem,  we can see $C_{N}(u) = P_u C_{T}(u)$ and  $C_{N}(P_u) = Z(P_u) C_{T}(P_u)$ for some $p$-complement $T$ of $N$. Hence 
\[e_u = |C_{N}(u) : P_u C_{N}(P_u)| = |C_{T}(u): C_{T}(P_u)|. \]
Now suppose that $e_u \neq 1$. Then $O_{p'}(C_{G}(P)) \subseteq C_{T}(P_u) \neq  C_T(u)$. So by applying Lemma 3 for $X = C_{T}(u)$, $u \in C_{P}(T)$ and hence $C_{N}(u) = P_u T$. (So we have $ u^N = u^P$. ) Since $\langle x^{p^{n-1}} \rangle $ is a normal subgroup of $P$ of order $p$,  $\langle x^{p^{n-1}} \rangle \subseteq C_{P}(u) = P_u$. >From (2) and (3), $C_{T}(x^{p^{n-1}})= O_{p'}(C_{G}(P))$, hence $C_{T}(P_u) = O_{p'}(C_{G}(P))$. Therefore we see  $e_ u = e$. Moreover  $C_{\tilde{P}}(u)$ is a $p$-hyperfocal subgroup of $C_{G}(u)$  because $[C_{T}(u), C_{\tilde{P}}(u)] = C_{\tilde{P}}(u)$. 

Finally suppose $e_u = 1.$ Then $O_{p'}(C_{G}(P)) = C_{T}(u)$ by the argument as in the case $e_u \neq 1$,  and hence $C_{N}(u) = P_{u}O_{p'}(C_{G}(P))$. (Then  we see $|u^{N}| = e |u^{P}|$.) $\blacksquare$ \end{proof}\\

Let $\Pi \subseteq P$ be a set of representatives for the conjugacy classes of $p$-elements of $G$. Let $\eta$ be a $G$-stable generalized character of $P$, that is, if $u$ and $u'$ in $P$ are $G$-conjugate, then $\eta(u) = \eta(u')$. By a theorem of Brauer on generalized characters (\cite{NT}, Theorem 3.4.2) and the second and third main theorem on principal blocks, if $\chi \in {\rm Irr}(b(G))$ then 
\[ \chi * \eta =  \sum_{u \in \Pi }\eta(u)\chi^{(u, b(C_{G}(u)))} \]
is a generalized character belonging to $b(G)$ (c.f. \cite{BP}).

Let $\tilde{{\cal X}}$ be a set of representatives for the $E$-conjugacy classes of non-trivial linear characters of $\tilde{P}$. For each $\mu \in \tilde{{\cal X}}$, we set
\[ \eta_{\mu} = 
\sum_{a \in E}\mu^a = \sum_{a \in N_{\tilde{G}}(\tilde{P})/C_{\tilde{G}}
(\tilde{P})}\mu^a. \]
This is $\tilde{G}$-stable. 

% Lemma 4 
\begin{lem} Suppose that a finite group $G$ has a cyclic Sylow $p$-subgroup $P$. Then the trivial character $1_G$ is not 
an exceptional character.  
\end{lem}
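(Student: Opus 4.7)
The plan is to argue by contradiction, invoking the standard character-value description of exceptional characters in blocks with cyclic defect group (Brauer--Dade): for each exceptional $\chi \in \mathrm{Irr}(b(G))$ there exist a non-trivial linear character $\mu$ of $P$ (well-defined up to the $E$-action) and a sign $\delta \in \{\pm 1\}$ such that
\[
\chi(u) \;=\; \delta\,\eta_\mu(u) \qquad (u \in P \setminus \{1\}),
\]
where $\eta_\mu = \sum_{a \in E}\mu^a$. This formula follows because, for every non-identity $u \in P$, the Brauer correspondent block $b_u$ of $b(G)$ in $C_G(u)$ has $l(b_u)=1$ with unique (trivial) Brauer character, so $\chi(u) = d^u_{\chi,1}$ is determined by Dade's formula for the generalized decomposition numbers on the $u$-section. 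I will assume $1_G$ is exceptional, say $1_G = \chi$ for such a $\chi$, and derive a contradiction.

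The first step is to compute the multiplicity of $1_P$ in $1_G\!\downarrow_P = 1_P$, which is $1$, in two ways. Since $\eta_\mu$ is a sum of non-trivial linear characters of $P$, orthogonality gives $\sum_{u \in P}\eta_\mu(u) = 0$, hence $\sum_{u \ne 1}\eta_\mu(u) = -\eta_\mu(1) = -e$. Writing $|P| = p^n$, the displayed formula yields
\[
1 \;=\; \bigl(1_G\!\downarrow_P,\,1_P\bigr)_P \;=\; \frac{1_G(1) + \delta\sum_{u \ne 1}\eta_\mu(u)}{p^n} \;=\; \frac{1 - \delta e}{p^n},
\]
so $\delta = -1$ and $e = p^n - 1$. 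Combining with $e \mid p-1$ forces $p^n - 1 \mid p - 1$, hence $n = 1$ and $e = p - 1$.

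The final step will dispose of the remaining extremal case $|P| = p$, $e = p - 1$ (where $(p-1)/e = 1$, so only a single exceptional character can exist). Here I plan to appeal to Dade's theorem: the Brauer tree of $b(G)$ coincides with that of its Brauer correspondent $b(N_G(P))$, and since $O_{p'}(N_G(P)) = O_{p'}(C_G(P))$ (as $E$ acts faithfully on $P$), this reduces to the Brauer tree of the principal block of the Frobenius group $C_p \rtimes C_{p-1}$. A direct character-table computation shows the tree is a star whose central (exceptional) vertex carries the induced character of degree $e = p - 1 > 1$. This will contradict $1_G(1) = 1$ and complete the proof. The main obstacle is precisely this final step: the orthogonality argument is not sufficient to rule out the inertial-index-maximal situation, and one must import the explicit Brauer-tree description from cyclic defect theory.
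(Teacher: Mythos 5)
Your opening computation is fine and is essentially the paper's own argument in disguise: assuming $1_G$ exceptional, so that $1_G(u)=\delta\eta_\mu(u)$ for all $u\in P\setminus\{1\}$, and summing over $P$ (equivalently computing $(1_G{\downarrow_P},1_P)$) forces $\delta=-1$ and $e=|P|-1$, hence $n=1$, $e=p-1$, i.e.\ $m=\frac{|P|-1}{e}=1$. Up to this point you and the paper agree.

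The gap is in your treatment of the remaining case $m=1$. First, the key claim you lean on --- that the Brauer tree of $b(G)$ coincides with the Brauer tree of its Brauer correspondent $b(N_G(P))$ --- is false; Brauer correspondence preserves the number of edges and the exceptional multiplicity, but not the shape of the tree (the principal block of $\mathit{PSL}(2,p)$ has a line as its tree, while its correspondent in the Sylow normalizer has a star; similarly $\mathit{PGL}(2,p)$, which has exactly your situation $e=p-1$, $m=1$, has a line, not a star centered at a degree-$(p-1)$ character). So you cannot import the star-shaped tree of $C_p\rtimes C_{p-1}$ into $G$. Second, and more fundamentally, the contradiction you aim for cannot exist: when $m=1$ there is no intrinsically distinguished exceptional character, and the trivial character really does have the ``exceptional'' value pattern $1_G(u)=-\eta_\mu(u)$ on $p$-singular elements (already in $G=S_3$, $p=3$). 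The correct disposal of this case is the one the paper uses: in Dade's theorem as formulated in Dornhoff, Theorem 68.1, exceptional characters exist only when $m>1$, so for $m=1$ no irreducible character of $b(G)$ is exceptional and the lemma holds vacuously. Replacing your final step by this observation (and dropping the Brauer-tree argument) repairs the proof and brings it in line with the paper's.
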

\begin{proof} We use \cite{Dor}, Theorem  68.1 (cf. \cite{D}). Set $e = |N_{G}(P): C_{G}(P)|$ and $m  = \frac{|P| -1 }{e}$. Then $e | (p - 1)$. 
If $m = 1$, then any irreducible character in $b(G)$ is not  exceptional. So we may assume $m \neq 1$. 
 Assume that  $1_{{G}}$ is exceptional.  For some non-trivial linear character $\mu$ of $P$, 
\[1 = 1_G(\pi) = \varepsilon \sum_{x \in N_{G}(P)/C_{G}(P)} {\mu}(\pi^x) \ (\forall \pi \in {P} \backslash \{ 1 \}), \]
where $\varepsilon = \pm 1$. Then we have $\varepsilon m = \sum_{\pi \in {P} \backslash \{ 1 \}} \mu(\pi) = -1$. This is a contradiction. $\blacksquare$ \end{proof} \\  

 % Lemma 5
\begin{lem} {\rm (Dade \cite{D})}
 ${\rm Irr}(b(\tilde{G})) = \{  \tilde{\chi}_1 = 1_{\tilde{G}},\tilde{\chi}_2, 
\cdots, \tilde{\chi}_e, \tilde{\chi}_{\mu} \ ( \mu \in \tilde{{\cal X}})\},$ where 
\begin{equation} \left\{ \begin{array}{l}
\tilde{\chi}_j (u\rho) = \varepsilon_j = \pm 1 \ ( 1 \leq j \leq e),   \\ 
\tilde{\chi}_{\mu}(u \rho) = \varepsilon \eta_{\mu}(u) \ (\varepsilon = \pm 1)\end{array} \right. \end{equation}
for $u (\neq 1) \in \tilde{P}$ and $ \rho \in (C_{\tilde{G}}(u))_{p'}$. Moreover if $P$ is normal in $G$, then $\varepsilon_j = 1$ and $\varepsilon = 1$.  
\end{lem}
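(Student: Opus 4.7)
The plan is to deduce this lemma from Dade's theorem on blocks with cyclic defect group \cite{D}, applied to the principal block $b(\tilde{G})$ whose defect group is the Sylow $p$-subgroup $\tilde{P}$ of $\tilde{G} = O^{p}(G)$. By (3) the inertial index of $b(\tilde{G})$ is $e$, so Dade's theorem gives $k(b(\tilde{G})) = e + m$ with $m = (p^{n}-1)/e = |\tilde{{\cal X}}|$, partitioning ${\rm Irr}(b(\tilde{G}))$ into $e$ non-exceptional and $m$ exceptional characters. Lemma 5 applied to $\tilde{G}$ forces $1_{\tilde{G}}$ to be non-exceptional, and we label it $\tilde{\chi}_{1}$.

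To recover the explicit values (4) I would first verify $l(b(C_{\tilde{G}}(u))) = 1$ for every $u \in \tilde{P} \setminus \{1\}$. By Lemma 2 applied to $\tilde{G}$, $\tilde{P}$ is a Sylow $p$-subgroup of $C_{\tilde{G}}(u)$, so the inertial index of $b(C_{\tilde{G}}(u))$ equals the order of the stabilizer of $u$ under the $E$-action on $\tilde{P}$. Since $E$ is a $p'$-group embedding faithfully in $\mathrm{Aut}(\tilde{P})$, its image lies in the $p'$-subgroup of $\mathrm{Aut}(\tilde{P}) \cong (\mathbb{Z}/p^{n}\mathbb{Z})^{\times}$, which is cyclic of order $p - 1$; any non-identity element of $E$ then acts as multiplication by some $r \not\equiv 1 \pmod{p}$, so $r - 1$ is a unit in $\mathbb{Z}/p^{n}\mathbb{Z}$ and $u^{r} = u$ forces $u = 1$. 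Hence $E$ acts freely on $\tilde{P} \setminus \{1\}$, $l(b(C_{\tilde{G}}(u))) = 1$, and ${\rm IBr}(b(C_{\tilde{G}}(u))) = \{1_{C_{\tilde{G}}(u)}\}$. By the second main theorem $\tilde{\chi}(u\rho) = d^{u}_{\tilde{\chi}}$ depends only on $u$, and Dade's theorem supplies these generalized decomposition numbers in exactly the form (4): $\pm 1$ independent of $u$ on the non-exceptional characters, and $\varepsilon\eta_{\mu}(u)$ with a single sign $\varepsilon$ on the exceptional ones.

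For the final assertion, if $P \trianglelefteq G$ then $G = N$, so by (2) $\tilde{P} \trianglelefteq \tilde{G}$, and Schur--Zassenhaus yields $\tilde{G} = \tilde{P} \rtimes \tilde{T}$; since $\tilde{K}$ is normal in $\tilde{G}$ as well, the quotient $\tilde{G}/\tilde{K} \cong \tilde{P} \rtimes E$ is a Frobenius group. The $e$ non-exceptional characters of $b(\tilde{G})$ are then the inflations of the $e$ linear characters of $E \cong \tilde{G}/\tilde{P}\tilde{K}$, and the $m$ exceptional ones are the inflations of the induced characters $\mu^{\tilde{P} \rtimes E}$ for $\mu \in \tilde{{\cal X}}$. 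The free action of $E$ on $\tilde{P} \setminus \{1\}$ forces $C_{\tilde{G}}(u) = \tilde{P} \times \tilde{K}$ and hence $\rho \in \tilde{K}$, so direct evaluation produces $\tilde{\chi}_{j}(u\rho) = 1$ and $\tilde{\chi}_{\mu}(u\rho) = \eta_{\mu}(u)$, giving $\varepsilon_{j} = \varepsilon = +1$. The main obstacle is pinning down $l(b(C_{\tilde{G}}(u))) = 1$, equivalently the freeness of the $E$-action on $\tilde{P} \setminus \{1\}$; once that is in hand, (4) is a direct reading of Dade's theorem \cite{D}.
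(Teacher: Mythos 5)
Your argument is correct and follows essentially the same route as the paper, whose proof simply invokes Dornhoff's Theorem 68.1 (Dade's cyclic defect theory applied to $b(\tilde{G})$ with defect group $\tilde{P}$) together with Lemma 4; your added verifications — that $E$ acts freely on $\tilde{P}\setminus\{1\}$ so each $C_{\tilde{G}}(u)$ is $p$-nilpotent, and the explicit Frobenius-group computation when $P$ is normal — just spell out what that citation covers. One small slip: the non-exceptionality of $1_{\tilde{G}}$ is the paper's Lemma 4, not Lemma 5 (which is the statement you are proving).
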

\begin{proof} Since $b(\tilde{G})$ has a cyclic defect group $\tilde{P}$, the lemma follows by Lemma 4 and \cite{Dor}, Theorem 68.1.   $\blacksquare$ \end{proof}

% Lemma 6    
\begin{lem} For  $\mu \in \tilde{{\cal X}}$, we have $1_{\tilde{G}} * \eta_{\mu} = (e -1)1_{\tilde{G}} - \displaystyle\sum_{i = 2}^{e}\varepsilon_i \tilde{\chi}_i + \varepsilon \tilde{\chi}_{\mu}. $
\end{lem}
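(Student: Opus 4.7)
My plan is to check the identity pointwise on $\tilde{G}$ and compare both sides section by section, using the $p$-section decomposition of $\tilde{G}$ together with the character values supplied by Lemma 5. Since $\tilde{P}$ is a Sylow $p$-subgroup of $\tilde{G}$ and $\eta_{\mu}$ is $\tilde{G}$-stable on $\tilde{P}$, every element of $\tilde{G}$ can be written, after conjugation, as $v\rho$ with $v\in\tilde{P}$ and $\rho\in C_{\tilde{G}}(v)_{p'}$; so it suffices to verify equality on elements of this form.

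First I compute the left-hand side explicitly. For any $p$-element $u$ the value $1_{\tilde{G}}^{(u,\,b(C_{\tilde{G}}(u)))}(u\rho)=1_{\tilde{G}}(u\rho\cdot b(C_{\tilde{G}}(u)))$ is simply the augmentation of the principal block idempotent of $C_{\tilde{G}}(u)$, which is $1$; the analogous augmentation vanishes on every other block idempotent. Hence $1_{\tilde{G}}^{(u,\,b(C_{\tilde{G}}(u)))}$ is the indicator class function of the $p$-section of $u$, and substituting into the definition of $\ast$ gives
\[
(1_{\tilde{G}}\ast\eta_{\mu})(v\rho)=\eta_{\mu}(v)\qquad(v\in\tilde{P},\ \rho\in C_{\tilde{G}}(v)_{p'}).
\]

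Writing $\psi:=(e-1)1_{\tilde{G}}-\sum_{i=2}^{e}\varepsilon_i\tilde{\chi}_i+\varepsilon\tilde{\chi}_{\mu}$ for the right-hand side, Lemma 5 immediately gives, for $v\neq 1$ in $\tilde{P}$,
\[
\psi(v\rho)=(e-1)-\sum_{i=2}^{e}\varepsilon_i^{2}+\varepsilon^{2}\eta_{\mu}(v)=(e-1)-(e-1)+\eta_{\mu}(v)=\eta_{\mu}(v),
\]
matching the left-hand side. On the $p$-regular section ($v=1$, $\rho\in\tilde{G}_{p'}$) the left-hand side equals $\eta_{\mu}(1)=e$, so, using $\tilde{\chi}_1=1_{\tilde{G}}$ and $\varepsilon_1=1$, the equality $\psi(\rho)=e$ is equivalent to the identity
\[
\varepsilon\,\tilde{\chi}_{\mu}(\rho)=\sum_{i=1}^{e}\varepsilon_i\tilde{\chi}_i(\rho)\qquad(\rho\in\tilde{G}_{p'}).
\]

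The hard part is this last identity, which is a structural consequence of Dade's theorem for blocks with cyclic defect. Read off from the Brauer tree of $b(\tilde{G})$, the signs $\{\varepsilon_i,\varepsilon\}$ produced in Lemma 5 satisfy $\varepsilon_i+\varepsilon_j=0$ whenever $\tilde{\chi}_i,\tilde{\chi}_j$ are the two (necessarily non-exceptional) endpoints of a common edge, and $\varepsilon_i=\varepsilon$ whenever the non-exceptional $\tilde{\chi}_i$ is adjacent to the exceptional vertex. Consequently $\sum_{i=1}^{e}\varepsilon_i d_{\tilde{\chi}_i,\tilde{\varphi}}=\varepsilon\,d_{\tilde{\chi}_{\mu},\tilde{\varphi}}$ for every $\tilde{\varphi}\in{\rm IBr}(b(\tilde{G}))$, and the $p$-regular identity follows. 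Combining the three section-by-section checks yields the claimed equality of generalized characters.
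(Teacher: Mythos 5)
Your treatment of the two easy parts coincides with the paper's: you compute $1_{\tilde{G}}*\eta_{\mu}$ by noting that $1_{\tilde{G}}^{(u,\,b(C_{\tilde{G}}(u)))}$ is the indicator function of the $p$-section of $u$ (the augmentation argument is fine), and on the $p$-singular sections you substitute the values (4) exactly as the paper does. The difference is the $p$-regular section, where both arguments reduce to $\varepsilon\tilde{\chi}_{\mu}(\rho)=\sum_{i=1}^{e}\varepsilon_{i}\tilde{\chi}_{i}(\rho)$ for $\rho\in\tilde{G}_{p'}$. The paper obtains this directly from block orthogonality (\cite{NT}, Theorem 5.4.5) applied to the column of a nontrivial $u\in\tilde{P}$ and a $p$-regular column, using \cite{Dor}, Theorem 68.1, (6) (all exceptional characters agree on $p$-regular classes) together with $\sum_{\nu\in\tilde{{\cal X}}}\eta_{\nu}(u)=-1$. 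You instead appeal to the Brauer tree: adjacent non-exceptional vertices carry opposite signs, a non-exceptional vertex adjacent to the exceptional vertex carries the sign $\varepsilon$, and hence $\sum_{i}\varepsilon_{i}d_{\tilde{\chi}_{i}\tilde{\varphi}}=\varepsilon\, d_{\tilde{\chi}_{\mu}\tilde{\varphi}}$ for every $\tilde{\varphi}$. This is correct, but it is the one point you assert rather than prove: the sign relations are not contained in Lemma 5/(4) as stated, and they are exactly as strong as the identity you are after. They do follow in one line, which you should supply: for each edge $\tilde{\varphi}$ the character of the corresponding projective indecomposable is the sum of the irreducible characters at its two endpoints (the exceptional vertex contributing $\sum_{\nu\in\tilde{{\cal X}}}\tilde{\chi}_{\nu}$); it vanishes on $p$-singular elements, so evaluating at $u\rho$ with $u\neq 1$ via (4) and $\sum_{\nu}\eta_{\nu}(u)=-1$ gives $\varepsilon_{i}+\varepsilon_{j}=0$, respectively $\varepsilon_{i}=\varepsilon$. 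With that sentence your route is complete and gives a structural explanation of where the signs sit on the tree; the paper's block-orthogonality computation is shorter and never needs to locate the exceptional vertex.
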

\begin{proof} Let $u \in \tilde{P}\backslash \{ 1\}$ and $\rho \in C_{\tilde{G}}(u)_{p'}$. 
>From (4), we have
\[(1_{\tilde{G}} * \eta_{\mu})(u\rho) = \eta_{\mu}(u), \] 
\[ ((e -1)1_{\tilde{G}} - \sum_{i = 2}^{e}
\varepsilon_i \tilde{\chi}_i + \varepsilon \tilde{\chi}_{\mu})(u\rho) = \eta_{\mu}(u). \]
Therefore 
 $1_{\tilde{G}} * \eta_{\mu} = (e -1)1_{\tilde{G}} - \displaystyle\sum_{i = 2}^{e}\varepsilon_i \tilde{\chi}_i + \varepsilon \tilde{\chi}_{\mu}$ on $p$-singular elements of $\tilde{G}$.

On the other hand for  $\tau \in \tilde{G}_{p'}$, by \cite{NT}, Theorem 5.4.5, the equation 
 (4) and \cite{Dor}, Theorem 68.1, (6), 
\[  0 = \sum_{i =1}^{e}\tilde{\chi}_{i}(u)\tilde{\chi}_{i}(\tau) + \sum_{\mu \in \tilde{{\cal X}}}\tilde{\chi}_{\mu}(u)\tilde{\chi}_{\mu}(\tau)
\]\[= 1 + \sum_{i =  2}^{e}\varepsilon_i \tilde{\chi}_i(\tau) +  
 \big( \varepsilon \sum_{\nu \in \tilde{{\cal X}}}\eta_{\nu}(u)\big)\tilde{\chi}_{\mu}(\tau)   \]
\[ =  1 + \sum_{i =  2}^{e}\varepsilon_i \tilde{\chi}_i(\tau) -  
\varepsilon \tilde{\chi}_{\mu}(\tau).  \]
From this 
\[(1 * \eta_{\mu})(\tau) = e = \big((e-1)1_{\tilde{G}} - \sum_{i = 2}^{e}\varepsilon_i \tilde{\chi}_i (\tau) + \varepsilon\tilde{\chi}_{\mu}\big)(\tau). \]
This completes the proof. $\blacksquare$
\end{proof} \\

We denote by Irr$_{P}(\tilde{P})$ the set of $P$-invariant linear characters of $\tilde{P}$. Note that if $\nu \in {\rm Irr}_{P}(\tilde{P})$ then $\nu^{a} \in {\rm Irr}_{P}(\tilde{P})$ for $a \in N$. Let $\nu \in $Irr$_{P}(\tilde{P})$. Then $\nu$ is trivial on $[\tilde{P}, P]$. Since $P/[\tilde{P}, P] = (\tilde{P}/[\tilde{P}, P]) \times (C_{P}(T)[\tilde{P}, P]/[\tilde{P}, P])$ by Lemma 2, there is a unique extension $\hat{\nu}$ of $\nu$ to $P$ such that $C_{P}(T) \subseteq {\rm Ker}\  \hat{\nu}$, where $T$ is a $p$-complement of $N$. In fact $C_{P}(U) \subseteq {\rm Ker}\  \hat{\nu}$ for any $p$-complement $U$ of $N$. We call $\hat{\nu}$ the $canonical$ $extension$ of $\nu$. For $a \in N$, $({\hat{\nu}})^a = \widehat{\nu^{a}}$. We set
\[ \eta_{\hat{\nu}} = \sum_{a \in E}\hat{\nu}^a. \]
This is  $N$-invariant, and hence $\eta_{\hat{\nu}}$ is $G$-stable by Proposition 2. Of course $\eta_{\hat{\nu}}\downarrow_{\tilde{P}} = \eta_{\nu}$. Also note $\eta_{\hat{\nu}}(u) = e$ if $u \in C_{P}(T)$.

% Prop 3
\begin{prop} Let $\nu \in {\rm Irr}_{P}(\tilde{P})$. Under the above notations, 
\begin{equation}1_{{G}} * \eta_{\hat{\nu}} = (e - 1)1_{{G}} - \sum_{i = 2}^{e}\varepsilon_i {\chi}_i + \varepsilon {\chi}_{\nu}, \ (\forall \nu \in {\rm Irr}_P(\tilde{P}))
\end{equation}
where $\chi_i$(resp. $\chi_{\nu}$) is an extension of $\tilde{\chi}_i$ (resp. $\tilde{\chi}_{\nu})$ to $G$.  
\end{prop}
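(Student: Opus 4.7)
The plan is to identify $1_{G} * \eta_{\hat{\nu}}$ with the right-hand side of (5) by first computing its restriction to $\tilde{G}$ via Lemma~7 and then lifting the identity to $G$ using the split extension $G = \tilde{G} \rtimes C_{P}(T)$ provided by Lemma~3. First, $1_{G} * \eta_{\hat{\nu}}$ is a generalized character of $G$ in $b(G)$ by Brauer's theorem on generalized characters (as invoked just before the proposition). For any $g = y\rho \in \tilde{G}$ with $y \in \tilde{P}$ the $p$-part of $g$ and $\rho \in C_{\tilde{G}}(y)_{p'}$, the identity $\eta_{\hat{\nu}}\!\downarrow_{\tilde{P}} = \eta_{\nu}$ gives $(1_{G} * \eta_{\hat{\nu}})(g) = \eta_{\hat{\nu}}(y) = \eta_{\nu}(y)$, so $(1_{G} * \eta_{\hat{\nu}})\!\downarrow_{\tilde{G}} = 1_{\tilde{G}} * \eta_{\nu}$. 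Lemma~7 then identifies this restriction as $(e-1)1_{\tilde{G}} - \sum_{i=2}^{e}\varepsilon_{i}\tilde{\chi}_{i} + \varepsilon \tilde{\chi}_{\nu}$.

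Next I would verify that each constituent $\tilde{\chi}_{1}, \ldots, \tilde{\chi}_{e}, \tilde{\chi}_{\nu}$ is $G$-stable. The non-exceptional $\tilde{\chi}_{i}$ are in canonical bijection with ${\rm IBr}(b(\tilde{G}))$, a set of cardinality $e$ coprime to $p$, so the $p$-group $G/\tilde{G}$ acts trivially on them. For $\tilde{\chi}_{\nu}$, Proposition~2 reduces the $G$-action on ${\rm Irr}(\tilde{P})$ to the $N$-action; writing $N = PT$, the $P$-factor acts trivially on $\nu$ (by $P$-invariance) while the $T$-factor preserves the $E$-orbit of $\nu$, whence $\tilde{\chi}_{\nu}$ is $G$-stable. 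Since $G = \tilde{G} \rtimes C_{P}(T)$ is split, each such $G$-stable irreducible of $b(\tilde{G})$ extends to an irreducible character of $G$ in $b(G)$; fix extensions $\chi_{i}$ of $\tilde{\chi}_{i}$ (with $\chi_{1} = 1_{G}$).

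Finally, setting $\chi_{\nu} := \varepsilon \bigl( 1_{G} * \eta_{\hat{\nu}} - (e-1) 1_{G} + \sum_{i=2}^{e} \varepsilon_{i} \chi_{i} \bigr)$ produces a generalized character of $G$ in $b(G)$ whose restriction to $\tilde{G}$ equals $\tilde{\chi}_{\nu}$. The main obstacle is to show $\chi_{\nu}$ is actually an irreducible character of $G$ extending $\tilde{\chi}_{\nu}$, rather than a virtual combination of extensions twisted by nontrivial linear characters of $G/\tilde{G}$. I would address this by observing that any generalized character of $b(G)$ vanishing on $\tilde{G}$ is an integer combination of differences $\chi\lambda - \chi$ (by Gallagher's theorem for the split extension), and then adjusting the choices of the $\chi_{i}$'s so that such twists are absorbed; equation (5) then holds for the resulting extensions and identifies $\chi_{\nu}$ as the irreducible extension of $\tilde{\chi}_{\nu}$ singled out by this decomposition.
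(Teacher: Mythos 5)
Your opening step coincides with the paper's: $(1_{G}*\eta_{\hat{\nu}})\downarrow_{\tilde{G}}=1_{\tilde{G}}*\eta_{\nu}$, which the lemma computing $1_{\tilde{G}}*\eta_{\mu}$ identifies with $(e-1)1_{\tilde{G}}-\sum_{i\ge 2}\varepsilon_{i}\tilde{\chi}_{i}+\varepsilon\tilde{\chi}_{\nu}$, together with the $G$-invariance of the constituents. The genuine gap is in the lift back to $G$. The restriction determines $1_{G}*\eta_{\hat{\nu}}$ only modulo generalized characters of $b(G)$ vanishing on $\tilde{G}$, and your description of that kernel is wrong: the differences $\chi\lambda-\chi$ with $\lambda$ a nontrivial linear character of $G/\tilde{G}$ do not span it. For instance, if $C_{P}(T)$ is nonabelian and $\lambda\in{\rm Irr}(G/\tilde{G})$ has degree $p$, then $p\,1_{G}-\lambda$ lies in ${\cal R}_{\cal K}(G,b(G))$ (as $b(G)$ is the unique block over $b(\tilde{G})$) and vanishes on $\tilde{G}$, yet has coefficient sum $p-1\neq 0$ and so is no integer combination of such differences; and even for abelian $C_{P}(T)$ the ambiguity includes constituents lying over the non-$P$-stable exceptional characters $\tilde{\chi}_{\mu}$, and several distinct twists $\chi_{i}\lambda$ of one and the same $\chi_{i}$, none of which can be absorbed by re-choosing the single extension $\chi_{i}$. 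Note also that (5) asserts more than the existence of suitable extensions: the component over $1_{\tilde{G}}$ must be exactly $(e-1)1_{G}$ (the trivial character itself, not, say, $(e-2)1_{G}+\lambda$ with $\lambda$ linear), and the same $\chi_{i}$ must serve every $\nu$ simultaneously (they become the canonical extensions used afterwards); your argument addresses neither point. Two smaller soft spots: splitness of $G=\tilde{G}\rtimes C_{P}(T)$ alone does not guarantee that an invariant character extends (you need, e.g., $p\nmid\tilde{\chi}(1)$), and the equivariance $(\tilde{\chi}_{\nu})^{v}=\tilde{\chi}_{\nu^{v}}$ behind the $G$-stability of $\tilde{\chi}_{\nu}$ itself requires an argument.

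What is missing is precisely the quantitative heart of the paper's proof. Using the dichotomy $e_{u}\in\{1,e\}$ and the structure of $C_{N}(u)$ (the lemmas of Section 2), the paper computes, via the counts $(*)$, $(**)$, $(***)$ over $\Pi_{1}$ and $\Pi_{2}$, that $(1_{G}*\eta_{\hat{\nu}},1_{G})=e-1$, $(1_{G}*\eta_{\hat{\nu}},1_{G}*\eta_{\hat{\nu}})=e^{2}-e+1$, and $(1_{G}*\eta_{\hat{\mu}'},1_{G}*\eta_{\hat{\mu}})=e(e-1)$ for $\mu\neq\mu'$. The first pins the coefficient of $1_{G}$ at $e-1$, contributing $(e-1)^{2}$ to the norm; the second leaves residual norm exactly $e$, and since the restriction forces at least one constituent over each of $\tilde{\chi}_{2},\dots,\tilde{\chi}_{e},\tilde{\chi}_{\nu}$, there is exactly one, with coefficient $\pm 1$ and multiplicity one — hence genuine irreducible extensions and no further terms; the third forces the same non-exceptional extensions and signs for all $\nu$. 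Without these inner-product computations the restriction argument cannot reach (5).
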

\begin{proof} Since $b(\tilde{G})$ has a cyclic defect group $\tilde{P}$ and since $C_{\tilde{G}}(u)$ is $p$-nilpotent for any $u \in \tilde{P} \backslash \{ 1 \}$, 
\[(*) \ \ \ (1_{\tilde{G}}^{(1, b(\tilde{G}))}, 1_{\tilde{G}}^{(1, b(\tilde{G}))})  = 1 - \sum_{u}(1_{C_{\tilde{G}}(u)}^{(u, b(C_{\tilde{G}}(u))}, 1_{C_{\tilde{G}}(u)}^{(u, b(C_{\tilde{G}}(u))})\]
\[ = 1 - \frac{p^{n} -1}{e}\frac{1}{p^n} = \frac{(e-1)p^n + 1}{ep^{n}},  \]
where $u$ runs over a set of representatives for the conjugacy classes of the $p$-elements of $\tilde{G}$. Hence we have 
\[ (1_G^{(1, b(G))}, 1_G^{(1, b(G))}) = (1_G^{(1, b(\tilde{G}))}, 1_G^{(1, b(\tilde{G}))})\frac{1}{|G:\tilde{G}|} = \frac{(e-1)p^n + 1}{e|P|}.\] 
Let $\Pi$ be a set of representatives for the $N$-conjugacy classes of $P \backslash \{ 1 \}$. By Proposition 2, $\Pi$ is a set of representatives for the conjugacy  classes of $p$-elements of $G$. Let $\Pi_1 = \{ u \in \Pi \ | \  e_u = 1 \}$ and $\Pi_2 = \Pi \backslash \Pi_1 =  \{ u \in \Pi \ | \ e_u = e \}$. Set $b_u = b(C_{G}(u))$ for $u \in \Pi$ and $U = \bigcup_{u \in \Pi_2} u^P$. By Lemmas 1 and 2, 
\[ \sum_{u \in \Pi_1}(1_{G}^{(u, b_u)}, 1_{G}^{(u, b_u)})  =  \sum_{u \in \Pi_1}|C_{P}(u)|^{-1} \]
\[ = \frac{1}{e|P|} \sum_{u \in \Pi_1} e |P: C_{P}(u)| = \frac{1}{e|P|}\big(|P| - |U| - 1).\]
Hence we have 
\[ (**) \ \ \ \ \ \ \sum_{u \in \Pi_2}(1_{G}^{(u, b_u)}, 1_{G}^{(u, b_u)}) \]
\[ = 1 - (1_{G}^{(1, b(G))}, 1_{G}^{(1, b(G))}) 
-  \sum_{u \in \Pi_1}(1_{G}^{(u, b_u)}, 1_{G}^{(u, b_u)}) \]
\[ =  \frac{(e - 1)|P| - (e - 1)p^n  + |U|}{e|P|}. \]
Since $(\eta_{\hat{\nu}}, 1_{P}) = 0$, 
\[e + \sum_{u \in \Pi_1}\eta_{\hat{\nu}}(u) e |P :C_{P}(u)| + \sum_{u \in \Pi_2}\eta_{\hat{\nu}}(u) |P : C_{P}(u)| = 0,  \]
and hence  
\[ (***) \ \ \ \ \ \ \  \sum_{u \in \Pi_1}\eta_{\hat{\nu}}(u) |C_{P}(u)|^{-1}\]
\[= -|P|^{-1} - \sum_{u \in \Pi_2} |C_{P}(u)|^{-1}   = -|P|^{-1} - |P|^{-1}|U| \]
because $\eta_{\hat{\nu}}(u) = e$ for $u \in \Pi_2$.  From $(*), (**)$ and $(***)$ 
\[ ( 1_G * \eta_{\hat{\nu}}, 1_G)  \]
\[= e(1_{G}^{(1, b(G))}, 1_{G}^{(1, b(G))})  +  \sum_{u \in \Pi_1}\eta_{\hat{\nu}}(u)(1_{G}^{(u, b_u)}, 1_{G}^{(u, b_u)}) +  e\sum_{u \in \Pi_2}(1_{G}^{(u, b_u)}, 1_{G}^{(u, b_u)}) \]
\[= \frac{(e - 1)p^n + 1}{|P|}  + (- |P|^{-1} - |P|^{-1}|U|) + \frac{(e - 1)|P| - (e - 1)p^n + |U|}{{|P|}}\]
\[ = e - 1.   \]
From this  we have 
\[
 (1_G * \eta_{\hat{\nu}}, 1_G * \eta_{\hat{\nu}} ) =  
(1_G * \eta_{\hat{\nu}}\eta_{\hat{\nu}^{-1}}, 
1_G) =  e + (e-1)^2 = e^2 - e + 1.  \]
Moreover, for $\mu, \mu' (\neq) \in {\cal M}$, 
 \[ (1_G * \eta_{\hat{\mu'}}, 1_G * \eta_{\hat{\mu}} ) =  (1_G * \eta_{\hat{\mu'}}\eta_{\hat{\mu}^{-1}}, 1_G) =  e(e-1). \]
By \cite{Dor}, Theorem 68.1,  $|{\rm IBr}(b(\tilde{G}))| = e < p$. Hence any irreducible Brauer character of $b(\tilde{G})$ is $G$-invariant, hence any $\tilde{\chi}_i$ are $G$-invariant by Lemma 5. On the other hand we see $(1_{G} * \eta_{\hat{\nu}})\downarrow_{\tilde{G}} =  1_{\tilde{G}} * \eta_{{\nu}}$ (cf. Cabanes \cite{Cabanes}, Theorem 1), hence Lemma 6 and the above imply (5). This completes the proof. $\blacksquare$ \end{proof} \\

We call $\chi_{i}$ (resp. ${\chi}_{\nu})$ in the above proposition the $canonical$ $extension$ of $\tilde{\chi}_i$ (resp. $\tilde{\chi}_{\nu})$. We set  $\chi_1 = 1_{G}$ and call it the canonical extension of $\tilde{\chi}_1 = 1_{\tilde{G}}$. Since $\tilde{\chi}_i$ is $p$-rational from (4), $\chi_i$ is also $p$-rational by Proposition 3 and the uniqueness of $\chi_i$.

Let $G_1$ be a subgroup of $G$ containing $\tilde{G}$ and let $P_1 = P \cap G_1 = \tilde{P} \rtimes (C_{P}(T) \cap G_1)$. Then $P_1$ is a Sylow $p$-subgroup of $G_1$,  $\tilde{P}$ is a $p$-hyperfocal subgroup of $G_1$ and $G_1$ also has the inertial index $e$. Let $T$ be a $p$-complement of $N$. Then  $P_1$ is normalized by $T$. From (3) \begin{equation} N_{G_1}(P_1) = O_{p'}(C_{{G_1}}(P_1))TP_1.  \end{equation} That is,  $O_{p'}(C_{G_1}(P_1))T$ is a $p$-complement of $N_{G_1}(P_1)$ because $T \cap C_{G_1}(P_1) = O_{p'}(C_{G}(P))$. Moreover, for $\nu \in {\rm Irr}_{P}(\tilde{P})$,  
\[ 1_{G_1}* \eta_{(\hat{\nu}\downarrow_{P_{1}})} = (1_{G}* \eta_{\hat{\nu}})\downarrow_{G_1}.\] 
Therefore the canonical extension of $\tilde{\chi}_i$ (resp. $\tilde{\chi}_{\nu}$) to $G_1$ coincides with $\chi_{i}\!\! \downarrow_{G_{1}}$ 
(resp. $\chi_{\nu}\!\! \downarrow_{G_{1}}$). 
 
Let $\nu \in $ {\rm Irr}$(\tilde{P})$. We denote by $P_{\nu}$ the stabilizer of $\nu$ in $P$ and set $G_{\nu} = \tilde{G}P_{\nu}$. 
Since $(\eta_{\nu})^v = \eta_{\nu^v}$ and hence $(1_{\tilde{G}}* \eta_{\nu} )^v = 1_{\tilde{G}}* \eta_{\nu^v}$ for $v \in P$,  the equation (5) implies 
\[ (\tilde{\chi}_{\nu})^v = \tilde{\chi}_{\nu^v}. \]
Hence $G_{\nu}$ is the stabilizer of $\tilde{\chi}_{\nu}$ in $G$ because $G = \tilde{G}P$. Since $C_{P}(\tilde{P}) \subseteq P_{\nu}$ and $P/C_{P}(\tilde{P})$ is a cyclic $p$-group isomorphic to a subgroup of Aut$(\tilde{P})$, $P_{\nu}$ is normalized by $N$ and hence $G_{\nu}$ is normal in $G$. Let $\hat{\nu}$ be the canonical extension of $\nu$ to $P_{\nu}$ and  let $\chi_{\nu}$ the canonical extension of $\tilde{\chi}_{\nu}$ to $G_{\nu}$. For $\lambda \in {\rm Irr}(P_{\nu}/\tilde{P}$) and for $t \in N_{p'}$, 
 $[P_{\nu}, t] \subseteq [P, t] \subseteq \tilde{P}$ and hence $\lambda^t = \lambda$. Therefore $(\eta_{\hat{\nu}}\lambda)\!\uparrow^{P}_{P_{\nu}}$ is $N$-invariant, and hence $G$-stable. Moreover we note that Irr$(P_{\nu}/\tilde{P})$ can 
be identified with Irr$(G_{\nu}/\tilde{G})$ through the isomorphism $P_{\nu}/\tilde{P} \cong G_{\nu}/\tilde{G}$.

%Prop 4
\begin{prop} Let $\nu \in $ {\rm Irr}$(\tilde{P})$. With the above notations, 
$ (1_{G_{\nu}} * \eta_{\hat{\nu}})\lambda = 1_{G_{\nu}} * \eta_{\hat{\nu}}\lambda = (e - 1)\lambda - \displaystyle\sum_{i = 2}^{e}\varepsilon_i ({{\chi}_i}\downarrow_{G_{\nu}})\lambda + \ \varepsilon {\chi}_{\nu} \lambda, $ and hence
\begin{equation} 
1_{G} * ((\eta_{\hat{\nu}} \lambda )\!\uparrow^P_{P_{\nu}}) = (e - 1) \lambda\!\uparrow_{G_{\nu}}^{G} -\sum_{i = 2}^{e}\varepsilon_i (({{\chi}_i \downarrow_{G_{\nu}})\lambda})\!\uparrow_{G_{\nu}}^{G} + \ \varepsilon ({{\chi}_{\nu}
\lambda})\!\uparrow_{G_{\nu}}^{G} \end{equation}
for any $\lambda \in ${\rm Irr}$(P_{\nu}/\tilde{P}) = ${\rm Irr}$(G_{\nu}/\tilde{G})$. \end{prop}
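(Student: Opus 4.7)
The plan is to apply Proposition 3 to the intermediate group $G_{\nu}$, multiply the resulting identity by $\lambda$, and then induce from $G_{\nu}$ up to $G$.

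First I would check that Proposition 3 applies to $G_{\nu} = \tilde{G}P_{\nu}$. The paragraph preceding the proposition records exactly what is needed: $G_{\nu}$ has Sylow $p$-subgroup $P_{\nu}$, hyperfocal subgroup $\tilde{P}$ and inertial index $e$; the character $\nu$ is $P_{\nu}$-invariant by the very definition of $P_{\nu}$; and the canonical extensions of $\tilde{\chi}_{i}$ and $\tilde{\chi}_{\nu}$ to $G_{\nu}$ are $\chi_{i}\!\!\downarrow_{G_{\nu}}$ and $\chi_{\nu}$ respectively. Proposition 3 then yields
\[ 1_{G_{\nu}} * \eta_{\hat{\nu}} = (e-1)1_{G_{\nu}} - \sum_{i=2}^{e}\varepsilon_{i}(\chi_{i}\!\!\downarrow_{G_{\nu}}) + \varepsilon \chi_{\nu}. \]

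Next I would multiply both sides by $\lambda$ and justify the first equality $(1_{G_{\nu}}*\eta_{\hat{\nu}})\lambda = 1_{G_{\nu}}*\eta_{\hat{\nu}}\lambda$ of the proposition. The key observation is that $\lambda$ has $\tilde{G}$ in its kernel while $G_{\nu}/\tilde{G} \cong P_{\nu}/\tilde{P}$ is a $p$-group, so every $p$-regular element of $G_{\nu}$ lies in $\tilde{G}$; hence $\lambda(g) = \lambda(g_{p})$ for every $g \in G_{\nu}$. Combined with the elementary identity $(1_{G_{\nu}} * \psi)(g) = \psi(g_{p})$ for any $G_{\nu}$-stable class function $\psi$ on $P_{\nu}$ (which follows from $1_{G_{\nu}}^{(u,b_{u})}$ being the indicator of the $p$-section of $u$, exactly as used implicitly in Proposition 3), one obtains
\[ ((1_{G_{\nu}} * \eta_{\hat{\nu}})\lambda)(g) = \eta_{\hat{\nu}}(g_{p})\lambda(g_{p}) = (\eta_{\hat{\nu}}\lambda)(g_{p}) = (1_{G_{\nu}} * \eta_{\hat{\nu}}\lambda)(g). \]
Multiplying the previous display by $\lambda$ now produces the statement on $G_{\nu}$.

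Finally I would induce this identity from $G_{\nu}$ to $G$. The right-hand side turns into the right-hand side of (7) term by term. For the left-hand side, the required identity is
\[ (1_{G_{\nu}} * \eta_{\hat{\nu}}\lambda)\!\!\uparrow^{G}_{G_{\nu}} = 1_{G} * ((\eta_{\hat{\nu}}\lambda)\!\!\uparrow^{P}_{P_{\nu}}), \]
which is meaningful because $(\eta_{\hat{\nu}}\lambda)\!\!\uparrow^{P}_{P_{\nu}}$ is $G$-stable, as noted just before the proposition. The hard part will be verifying this commutativity of $*$ with induction. Both sides depend on $g$ only through $g_{p}$, since $g_{p'} \in \tilde{G} \subseteq G_{\nu}$ is automatic, and one has to match the Frobenius sum over $G$-conjugators sending $g_{p}$ into $G_{\nu}$ with the sum over $P$-conjugators sending $g_{p}$ into $P_{\nu}$, using that $P_{\nu}$ is a Sylow $p$-subgroup of $G_{\nu}$ and the fusion control of Proposition 2. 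This is the counterpart, for induction, of the restriction identity $(1_{G} * \eta_{\hat{\nu}})\!\!\downarrow_{\tilde{G}} = 1_{\tilde{G}} * \eta_{\nu}$ invoked via Cabanes in the proof of Proposition 3.
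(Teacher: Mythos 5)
Your reduction is exactly the one the paper makes: apply Proposition 3 to $G_{\nu}$ (where $\nu$ is $P_{\nu}$-invariant, $O^{p}(G_{\nu})=\tilde{G}$, the hyperfocal subgroup is still $\tilde{P}$, and the canonical extensions to $G_{\nu}$ are $\chi_i\!\downarrow_{G_{\nu}}$ and $\chi_{\nu}$), multiply by $\lambda$ using that $p$-regular elements of $G_{\nu}$ lie in $\tilde{G}\subseteq\ker\lambda$, and then observe that everything hinges on the single identity
$1_{G} * ((\eta_{\hat{\nu}} \lambda)\!\uparrow^P_{P_{\nu}}) = (1_{G_{\nu}} * (\eta_{\hat{\nu}} \lambda))\!\uparrow^G_{G_{\nu}}$.
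But that identity is the entire content of the paper's proof, and you leave it unproven: ``the hard part will be verifying this commutativity'' is a statement of intent, not an argument, and the route you gesture at (matching the Frobenius sum over $G$-conjugators with the sum over $P$-conjugators via Sylow theory and the fusion control of Proposition 2) is both vaguer and heavier than what is actually needed.

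The missing idea is structural and is recorded in the paragraph preceding the proposition: $P_{\nu}$ is normalized by $N$, hence normal in $P$, and $G_{\nu}=\tilde{G}P_{\nu}$ is normal in $G$; moreover, since $G=\tilde{G}P$, a transversal $v_1,\dots,v_l$ of $P_{\nu}$ in $P$ is simultaneously a transversal of $G_{\nu}$ in $G$. With these facts the verification is a direct computation, no fusion required: for $u\in P_{\nu}$ and $\rho\in C_{G}(u)_{p'}$,
\[
(1_{G_{\nu}} * (\eta_{\hat{\nu}} \lambda))\!\uparrow^G_{G_{\nu}}(u\rho)
=\sum_{j=1}^{l}(1_{G_{\nu}} * (\eta_{\hat{\nu}} \lambda))\big((u\rho)^{v_j}\big)
=\sum_{j=1}^{l}(\eta_{\hat{\nu}} \lambda)(u^{v_j})
=\big((\eta_{\hat{\nu}} \lambda)\!\uparrow^P_{P_{\nu}}\big)(u)
=\big(1_{G} * ((\eta_{\hat{\nu}} \lambda)\!\uparrow^P_{P_{\nu}})\big)(u\rho),
\]
where normality guarantees that every $(u\rho)^{v_j}$ lies in $G_{\nu}$ with $p$-part $u^{v_j}\in P_{\nu}$; and on the $p$-sections of elements whose $p$-part is not conjugate into $P_{\nu}$ both sides vanish, since the $p$-part of any element of $G_{\nu}$ is conjugate into its Sylow subgroup $P_{\nu}$. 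So the skeleton of your proposal coincides with the paper's argument, but the one substantive step is not carried out, and your sketch of it overlooks the normality of $G_{\nu}$ and $P_{\nu}$ and the shared transversal inside $P$, which are precisely what turn this step into a two-line calculation.
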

\begin{proof} It suffices to show $1_{G} * ((\eta_{\hat{\nu}} \lambda )\!\uparrow^P_{P_{\nu}}) = (1_{G_{\nu}} * (\eta_{\hat{\nu}} \lambda)) \uparrow^G_{G_{\nu}}$. Let $P = \bigcup_{j = 1}^{l} v_j P_{\nu}$ (disjoint) and let $u \in P_{\nu}$. Since $G = \bigcup_{j = 1}^{l}v_j  G_{\nu}$ (disjoint)  and $P_{\nu}$ is normal in $P$, for $\rho \in C_{G}(u)_{p'}$, 
\[ (1_{G_{\nu}} * (\eta_{\hat{\nu}} \lambda))\uparrow^G_{G_{\nu}}\!(u\rho) = \sum_{j = 1}^{l} (1_{G_{\nu}} * (\eta_{\hat{\nu}} \lambda))((u\rho)^{v_j}) \]
\[= \sum_{j = 1}^{l}( \eta_{\hat{\nu}} \lambda)(u^{v_j}) =  
((\eta_{\hat{\nu}} \lambda )\!\uparrow^P_{P_{\nu}})(u) = (1_{G} * ((\eta_{\hat{\nu}} \lambda)\!\uparrow^P_{P_{\nu}}))(u\rho). \]
This completes the proof. $\blacksquare$ 
\end{proof} \\

For $\nu,$ $ \nu' \in \tilde{{\cal X}}$, $\tilde{\chi}_{\nu}$ and 
$\tilde{\chi}_{\nu'}$ are $G$-conjugate if and only if those are $P$-conjugate if and only if $\nu $ and $\nu'$ are $(P\rtimes E)$-conjugate. Since $b(G)$ is the unique block of $G$ which covers $b(\tilde{G})$, we have the following. 

% prop 5
\begin{prop} Let ${\cal X} (\subseteq \tilde{{\cal X}})$ be a set of representatives for the $(P \rtimes E)$-conjugacy classes of {\rm Irr} $(\tilde{P}) \backslash \{ 1_{\tilde{P}} \}$.  
\[ {\rm Irr}(b(G)) = \bigcup_{i = 1}^e \Big\{ \chi_i \lambda \ | \  \lambda \in {\rm Irr}(G/\tilde{G}) \Big\}
\bigcup_{\nu \in {\cal X}} \Big\{ (\chi_{\nu} \lambda)\!\uparrow_{G_{\nu}}^{G} \ | \ \lambda \in {\rm Irr}(G_{\nu}/\tilde{G}) \Big\}. \]
 \end{prop}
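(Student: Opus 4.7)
The plan is to derive the statement from Clifford theory applied to the normal subgroup $\tilde{G} \trianglelefteq G$, using the preceding propositions to handle the extension step. First I would observe that since $G/\tilde{G}$ is a $p$-group and $\tilde{G} = O^p(G)$ is normal in $G$, there is a unique block of $G$ covering $b(\tilde{G})$, and it is $b(G)$; therefore $\chi \in {\rm Irr}(b(G))$ if and only if $\chi$ lies over some member of ${\rm Irr}(b(\tilde{G}))$. By Lemma~5 (Dade), the latter set consists of the $e$ non-exceptional characters $\tilde{\chi}_1, \ldots, \tilde{\chi}_e$ and the exceptional characters $\tilde{\chi}_\mu$ for $\mu \in \tilde{{\cal X}}$.

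Next I would treat the two types separately. For the non-exceptional characters, the remark preceding Proposition~4 (that $|{\rm IBr}(b(\tilde{G}))| = e < p$ and hence each $\tilde{\chi}_i$ is $G$-invariant) together with Proposition~3 gives a canonical extension $\chi_i \in {\rm Irr}(G)$ of $\tilde{\chi}_i$. By Gallagher's theorem the set of irreducible characters of $G$ lying over $\tilde{\chi}_i$ is precisely $\{\chi_i \lambda \mid \lambda \in {\rm Irr}(G/\tilde{G})\}$, which accounts for the first union in the statement. For the exceptional characters, the discussion preceding Proposition~4 identifies the stabilizer of $\tilde{\chi}_\nu$ in $G$ as $G_\nu = \tilde{G}P_\nu$, and Proposition~4 supplies the canonical extension $\chi_\nu \in {\rm Irr}(G_\nu)$. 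Applying Gallagher to $G_\nu/\tilde{G}$ yields ${\rm Irr}(G_\nu \mid \tilde{\chi}_\nu) = \{\chi_\nu \lambda \mid \lambda \in {\rm Irr}(G_\nu/\tilde{G})\}$, and Clifford's induction theorem turns these into the distinct irreducible characters $(\chi_\nu \lambda)\!\uparrow_{G_\nu}^G$ of $G$ lying over $\tilde{\chi}_\nu$.

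Finally, to obtain the indexing by ${\cal X}$, I would invoke the statement (recorded just before the proposition) that for $\nu, \nu' \in \tilde{{\cal X}}$, $\tilde{\chi}_\nu$ and $\tilde{\chi}_{\nu'}$ are $G$-conjugate if and only if $\nu$ and $\nu'$ are $(P \rtimes E)$-conjugate. Running through the $(P \rtimes E)$-orbit representatives $\nu \in {\cal X}$ therefore produces each $G$-orbit of exceptional characters of $b(\tilde{G})$ exactly once, so the Clifford-induced sets $\{(\chi_\nu \lambda)\!\uparrow_{G_\nu}^G\}$ are mutually disjoint and collectively exhaust ${\rm Irr}(G \mid \tilde{\chi}_\mu : \mu \in \tilde{{\cal X}})$. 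Disjointness from the first union is automatic because characters there lie over the $\tilde{\chi}_i$'s rather than over any $\tilde{\chi}_\nu$.

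The one step that requires genuine care, rather than routine Clifford bookkeeping, is the identification of $G_\nu$ with $\tilde{G}P_\nu$ and the assertion that $\chi_\nu$ is a well-defined extension equivariant under the $P$-action used to index ${\cal X}$; but both of these facts are exactly what Proposition~4 and the paragraph preceding it were designed to supply, so the argument reduces to assembling them in the order above.
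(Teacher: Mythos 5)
Your argument is correct and is essentially the route the paper takes: the paper derives Proposition 5 immediately from the fact that $b(G)$ is the unique block covering $b(\tilde{G})$, Dade's description of ${\rm Irr}(b(\tilde{G}))$ (Lemma 5), the canonical extensions furnished by Proposition 3 and the discussion preceding Proposition 4 (which identifies the stabilizer $G_{\nu}=\tilde{G}P_{\nu}$ and defines $\chi_{\nu}$), and standard Clifford--Gallagher bookkeeping, which you have simply written out in full. The only nitpick is attributional: the existence of the canonical extension $\chi_{\nu}$ to $G_{\nu}$ comes from Proposition 3 applied to the subgroup $G_{\nu}\supseteq\tilde{G}$ (as in the paragraph on $G_1$), rather than from Proposition 4 itself.
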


\section{Generalized decomposition numbers and a proof of Theorem 1}

 In this section we use notations in Propositions 4 and 5. For $u \in P$, set 
$b_u = b(C_{G}(u))$. We note that if $C_{G}(u)$ is  $p$-nilpotent, then \[ d^{u}_{\chi, 1_{C_{G}(u)}} = \chi(u) \ \ (\forall \chi \in {\rm Irr}(b(G)). \]

%prop 6
\begin{prop} Let $u \in P$ and assume $e_{u} = 1$. We have  
\[ (\chi_{i}\lambda)(u) = \varepsilon_i \lambda(u),  \]
\[ ({\chi_{\nu}}\lambda_{\nu})\!\uparrow_{G_{\nu}}^G\! (u) = \varepsilon(\eta_{\hat{\nu}}\lambda_{\nu})\!\uparrow_{P_{\nu}}^{P}\!(u),  \]
\[ (1 \leq i \leq e, \ \lambda \in {\rm Irr}(P/\tilde{P}) = {\rm Irr}(G/\tilde{G}), \ \nu \in {\cal X},\  \lambda_{\nu} \in {\rm Irr}(P_{\nu}/\tilde{P}) = {\rm Irr}(G_{\nu}/\tilde{G}) ). \]
\end{prop}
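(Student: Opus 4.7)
The plan is to evaluate the identities of Propositions 3 and 4 at the element $u$ and extract the individual values. A key preliminary simplification: since $1_{G}^{(u, b_u)}(u) = 1_{G}(u \cdot b_u) = 1$ (the augmentation of the principal block idempotent $b_u$ of $C_G(u)$ is $1$, and $1_G$ is the augmentation character), we have $(1_G * f)(u) = f(u)$ for any $G$-stable generalized character $f$ of $P$. This collapses the LHS of (4) and (6) at $u$ to pure values of $f$.

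For part (a), since $(\chi_i\lambda)(u) = \chi_i(u)\lambda(u)$ and $\lambda(u)$ is a nonzero root of unity, it suffices to show $\chi_i(u) = \varepsilon_i$. I specialize (4) to $\nu \in \mathrm{Irr}_P(\tilde P)\setminus\{1\}$ (so $P_\nu = P$ and $G_\nu = G$) and evaluate at $u$, obtaining
\[
\eta_{\hat\nu}(u) = (e-1) - \sum_{i=2}^{e} \varepsilon_i \chi_i(u) + \varepsilon \chi_\nu(u).
\]
From $e_u = 1$ together with Lemmas 3 and 4, the $\tilde P$-part $u_1$ of $u$ under $P = \tilde P \rtimes C_P(T)$ is non-trivial; and since the linear character $\hat\nu$ is trivial on $C_P(T)$, one has $\hat\nu(u) = \nu(u_1)$, hence $\eta_{\hat\nu}(u) = \eta_\nu(u_1)$. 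The restrictions $\chi_i\!\downarrow_{\tilde G} = \tilde\chi_i$ and $\chi_\nu\!\downarrow_{\tilde G} = \tilde\chi_\nu$ combined with the Dade formulas (3) give the values $\tilde\chi_i(u_1) = \varepsilon_i$ and $\tilde\chi_\nu(u_1) = \varepsilon\eta_\nu(u_1)$ on the $\tilde G$-part. Combining these with the $p$-rationality of $\chi_i$ (which forces $\chi_i(u) \in \mathbf{Z}$ as a rational sum of $p$-power roots of unity) and varying $\nu$ through $\mathrm{Irr}_P(\tilde P)\setminus\{1\}$, the resulting system pins down $\chi_i(u) = \varepsilon_i$ and, correspondingly, $\chi_\nu(u) = \varepsilon\eta_{\hat\nu}(u)$. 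Multiplying by $\lambda(u)$ gives (a).

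For (b), $G_\nu$ is normal in $G$ with $[G:G_\nu] = [P:P_\nu]$, and coset representatives of $G/G_\nu$ can be chosen inside $P$ via $G = \tilde G P$ and $\tilde G \subseteq G_\nu$. The induced-character formula therefore collapses to
\[
((\chi_\nu\lambda_\nu)\!\uparrow^G_{G_\nu})(u) = \sum_{v \in P/P_\nu}(\chi_\nu\lambda_\nu)(u^v)
\]
for $u \in P_\nu$, and vanishes for $u \notin P_\nu$ (in parallel with the RHS $(\eta_{\hat\nu}\lambda_\nu)\!\uparrow^P_{P_\nu}$, since $P_\nu$ is normal in $P$). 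Since $e_{u^v} = e_u = 1$ for every $v \in P$, the identity reduces term-by-term to $\chi_\nu(w) = \varepsilon\eta_{\hat\nu}(w)$ at each $w = u^v \in P_\nu$ with $e_w = 1$, which is the content of (a) carried out inside $G_\nu$ (using (6) in place of (4)).

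The principal obstacle is the pin-down step in (a): although $u$ and $u_1$ are not $G$-conjugate when the $C_P(T)$-part of $u$ is non-trivial (so one cannot simply invoke $\chi_i(u) = \chi_i(u_1) = \varepsilon_i$), the simultaneous system of identities from (4), combined with the $p$-rationality of the canonical extension $\chi_i$ and the Gallagher-type uniqueness of $p$-rational extensions of $G$-invariant characters, ultimately forces $\chi_i(u) = \varepsilon_i$.
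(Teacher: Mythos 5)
There is a genuine gap at exactly the point you yourself flag as ``the principal obstacle.'' Evaluating (5) at $u$ gives, for each $\nu\in\mathrm{Irr}_P(\tilde P)\setminus\{1_{\tilde P}\}$, the single equation $\eta_{\hat\nu}(u) = (e-1) - \sum_{i=2}^{e}\varepsilon_i\chi_i(u) + \varepsilon\chi_\nu(u)$, in which $\chi_\nu(u)$ is itself unknown (indeed the value $\varepsilon\eta_{\hat\nu}(u)$ for $\chi_\nu(u)$ is part of what the proposition asserts). So each new $\nu$ brings a new unknown, and the system imposes no constraint at all on the integers $x_i=\chi_i(u)$: for an arbitrary choice of the $x_i$ one can simply solve each equation for $\chi_\nu(u)$. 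Neither $p$-rationality (which only yields $x_i\in\mathbf{Z}$) nor the Gallagher-type uniqueness of the canonical/$p$-rational extension (which identifies $\chi_i$ as a character of $G$ but evaluates nothing at $u$) turns this underdetermined linear system into a determination of the $x_i$; and restriction to $\tilde G$ only gives values at $u_1$, which, as you note, need not be conjugate to $u$. So the crucial step is asserted, not proved.

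What is actually needed, and what the paper supplies, is a quantitative orthogonality input. Since $C_G(u)$ is $p$-nilpotent, $l(b_u)=1$ and $d^{u}_{\chi,1_{C_G(u)}}=\chi(u)$, so the relation ${}^t D^{(u,b_u)}\overline{D^{(u,b_u)}}=$ Cartan matrix ([NT], Theorem 5.4.11) reads $\sum_{\chi\in\mathrm{Irr}(b(G))}|\chi(u)|^2=|C_P(u)|$. After reducing to $P=\langle\tilde P,u\rangle$, the paper feeds Proposition 5's description of $\mathrm{Irr}(b(G))$ into this identity (equation (8)), and then computes $\sum_{\nu\in\mathcal{X}_1}|\eta_{\hat\nu}(u)|^2=p^{n'}-e$ and $\sum_{\nu\in\mathcal{X}_1}\eta_{\hat\nu}(u)=-1$ via second orthogonality in $P$ together with the fact (proved by Glauberman's lemma) that $u$ and $u^{t}$ are not $P$-conjugate for $t\in T\setminus O_{p'}(C_G(P))$. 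This collapses to $\sum_{i=2}^{e}(\varepsilon_i x_i-1)^2+m\bigl(\sum_{i=2}^{e}(\varepsilon_i x_i-1)\bigr)^2=0$, forcing $x_i=\varepsilon_i$. Your proposal contains no analogue of this counting argument, so the pin-down step is missing; once $x_i=\varepsilon_i$ is known, both displayed formulas of the proposition follow directly from (7), which also makes your separate part (b) reduction unnecessary.
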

\begin{proof} By Lemma 1, $P_u = C_{P}(u)$ is a Sylow $p$-subgroup of $C_{G}(u)$. Here we show that $u$ and $u^{t} $ are not $P$-conjugate for any 
 $u \in P$ with $e_u = 1$ and $t \in T \backslash O_{p'}(C_{G}(P))$ where $T$ is a $p$-complement of $N$. Suppose that $u$ and $u^t$ are $P$-conjugate. Then $\langle t \rangle$ acts on $u^P$ by conjugation. Since $P$ acts on $u^P$ transitively. So by a lemma of Glauberman, there is $u' \in u^P$ such that ${u'}^t = u'$. Hence $u^{v^{-1}tv} = u$ for some $v \in P$. 
 This is a contradiction by Lemma 6. Set $\bar{P} = P/\tilde{P}$ and let $P_{\nu}$ be the stabilizer of $\nu$ in $P$ for $\nu \in {\cal X}$. Moreover we set $\bar{P}_{\nu} = P_{\nu}/\tilde{P}$ and  $x_i = \chi_i(u)$.  We recall $x_i$ is a rational integer. From (7), it suffices to show in order to establish the proposition 
\[x_i = \varepsilon_i \ (1 \leq i \leq e). \]
So we can assume that $P = \langle \tilde{P}, u \rangle$, hence 
\[ G = \langle \tilde{G}, u \rangle. \]
 From (7) again and \cite{NT}, Theorem 5.4.11,  
\[ \sum_{\lambda \in {\rm Irr}(\bar{P})}|\lambda(u)|^2  + \sum_{i = 2}^{e} \sum_{\lambda \in {\rm Irr}(\bar{P})}{x_i}^2|\lambda(u)|^2 \]
\[+ \sum_{\nu}\sum_{\lambda_{\nu} \in {\rm Irr}(\bar{P}_{\nu})}\left|((\eta_{\hat{\nu}}\lambda_{\nu})\!\uparrow_{P_{\nu}}^P\big)(u) - (e -1)(\lambda_{\nu}\!\uparrow^{P}_{P_{\nu}})(u) + \sum_{i = 2}^{e}\varepsilon_{i} x_i (\lambda_{\nu}\!\uparrow^{P}_{P_{\nu}})(u)\right|^2 \] 
\[ = |C_{P}(u)| \]
where $\nu$ runs over a set $\{ \nu \in {\cal X} \ | \ P_{\nu} =  P \}$. 
Hence 
%(8)
\begin{equation}
 1+ \sum_{i = 2}^{e} {x_i}^2 + \sum_{ \nu }\left|\eta_{\hat{\nu}}(u) - (e -1) + \sum_{i = 2}^{e}\varepsilon_{i} x_i \right|^2  = |C_{\tilde{P}}(u)| \end{equation}
where $\nu$ runs over  $\{ \nu \in {\cal X} \ |\  P_{\nu} = P \}$. Set $p^{n'}  = |C_{\tilde{P}}(u)|$. By Brauer's permutation lemma (\cite{NT}, Lemma 3.2.18, (i)), the numbers of  $P$-invariant linear characters of $\tilde{P}$ is equal to $|C_{\tilde{P}}(u)|$. Let ${\cal X}_1 = \{ \nu \in {\cal X} \ | \ \nu \in {\rm Irr}_{P}(\tilde{P}) \}$. We have $|{\cal X}_1| = \frac{p^{n'} - 1}{e}$. Set
 $m = \frac{p^{n'} - 1}{e}$.

By the second orthogonality relation for $P$ and the fact that $u$ and $u^t$ are not $P$-conjugate for any $u \in P$ with $e_u = 1$, and  for any $t \in T \backslash O_{p'}(C_{G}(P))$, we can show $\sum_{\nu \in {\cal X}_1}|\eta_{\hat{\nu}}(u)|^2 = p^{n'} - e$. Moreover by the definition of $\hat{\nu}$ and the fact $u \not\in C_{P}(E)$
 we see $ \sum_{\nu \in {\cal X}_1}{\eta}_{\hat{\nu}}(u) = -1$. In fact by using the second  
 orthogonality relations for $P$  
\[ |P/\tilde{P}| \sum_{\nu \in {\cal X}_{1}}|\eta_{\hat{\nu}}(u)|^2 
= \sum_{t' \in E} \Big( \sum_{t \in E}\sum_{\nu \in {\cal X}_1} \sum_{\lambda \in {\rm Irr}(P/\tilde{P})}{\hat{\nu}}^t (u^{-1}){\hat{\nu}}^t (u^{t'})\lambda(u^{-1})\lambda(u) \]
\[+  \sum_{t \in E}\sum_{\mu \in {\cal X} \backslash {\cal X}_1} \sum_{\lambda_{\mu}}(\hat{\mu}\uparrow^{P}_{P_{\mu}})^{t}(u^{-1})(\hat{\mu}\uparrow^{P}_{P_{\mu}})^{t}(u^{t'})\lambda_{\mu}(u^{-1})\lambda_{\mu}(u) \Big) = 
|C_{P}(u)| - |P/\tilde{P}|e,  \]
where  $\lambda_{\mu}$ runs over ${\rm Irr}(P)/ X_{\mu}$, $X_{\mu}$ is the kernel of the restriction map ${\rm Irr}(P/\tilde{P}) \rightarrow {\rm Irr}(P_{\mu}/\tilde{P})$, because $u$ and $u^{t'}$ are not $P$-conjugate if $t' \neq 1$.  We also note $\lambda(u^{t'}) = \lambda(u)$. Thus we obtain  $\sum_{\nu \in {\cal X}_1}|\eta_{\hat{\nu}}(u)|^2 = p^{n'} - e$. 
Next we show  $ \sum_{\nu \in {\cal X}_1}{\eta}_{\hat{\nu}}(u) = -1$. Write $ u = \tilde{u} r$ 
($\tilde{u} \in \tilde{P}$, $r \in C_{P}(E)$). Then $P = \langle \tilde{P}, r \rangle$. $[\tilde{P}, P] = [\tilde{P}, r] = 
\langle [\tilde{P}, r] \rangle$, $r^P = [\tilde{P}, r]r$. Since $e_u = 1$ and hence $u \not\in r^P$, $\tilde{u} \not\in [\tilde{P}, P]$. 
By the second orthogonality relation for $P/[\tilde{P}, P] \cong (\tilde{P}/[P, \tilde{P}] ) \times C_{P}(E)$, 
\[ 0 = \sum_{t \in E} \sum_{\nu \in {\cal X}_1} \sum_{\lambda \in {\rm Irr}(P/\tilde{P})}
({\hat{\nu}}^t \cdot \lambda)(\tilde{u}) + \sum_{\lambda \in {\rm Irr}(P/\tilde{P})} \lambda(\tilde{u}) \]
\[  = |P/\tilde{P}|\big(\sum_{\nu \in {\cal X}_1}\eta_{\hat{\nu}}(u) +1 \big).  \]
Therefore we have $\sum_{\nu \in {\cal X}_1}\eta_{\hat{\nu}}(u) = -1$. 

\noindent
Now from (8), 
%(9)
\begin{equation}p^{n'}-1-\sum_{i=2}^e{x_i}^2=\sum_{\nu\in\mathcal X_1}\left|\eta_{\widehat\nu}(u)-(e-1)+\sum_{i=2}^e\varepsilon_ix_i\right|^2. \end{equation}
\[{\rm The \ left \ hand \ of} \ (9) = p^{n'}-e-\sum_{i=2}^e\left({x_i}^2-1\right).\]
\[{\rm The \ right\  hand\  of }\ (9) = \sum_{\nu\in\mathcal X_1}\left|\eta_{\widehat\nu}(u)+\sum_{i=2}^e\left(\varepsilon_ix_i-1\right)\right|^2 \]
\[= \sum_{\nu\in\mathcal X_1}\left|\eta_{\widehat\nu}(u)\right|^2+\sum_{\nu\in\mathcal X_1}\left(\eta_{\widehat\nu}(u)+\overline{\eta_{\widehat\nu}(u)}\right)\sum_{i=2}^e(\varepsilon_ix_i-1)+\sum_{\nu\in\mathcal X_1}\left(\sum_{i=2}^e\left(\varepsilon_ix_i-1\right)\right)^2 \]
\[= p^{n'}-e-2\sum_{i=2}^e(\varepsilon_ix_i-1)+m\left(\sum_{i=2}^e\left(\varepsilon_ix_i-1\right)\right)^2. \]
Hence we have 
\begin{eqnarray*}
0 &= & \sum_{i=2}^e\left({x_i}^2-1\right)-2\sum_{i=2}^e(\varepsilon_ix_i-1)+m\left(\sum_{i=2}^e\left(\varepsilon_ix_i-1\right)\right)^2 \\
 &=& \sum_{i=2}^e\left(\left({x_i}^2-1\right)-2(\varepsilon_ix_i-1)\right)+m\left(\sum_{i=2}^e\left(\varepsilon_ix_i-1\right)\right)^2 \\ 
 &=& \sum_{i=2}^e\left({x_i}^2-2\varepsilon_ix_i+1\right)+m\left(\sum_{i=2}^e\left(\varepsilon_ix_i-1\right)\right)^2 \\
&=& \sum_{i=2}^e(\varepsilon_ix_i-1)^2+m\left(\sum_{i=2}^e\left(\varepsilon_ix_i-1\right)\right)^2. 
\end{eqnarray*}
Hence we have 
\[\varepsilon_ix_i-1=0 \therefore x_i = \varepsilon_i \ ( 2 \leq i \leq e). \]
This completes the proof. $\blacksquare$ \end{proof} \\

Since $|{\rm IBr}(b(\tilde{G}))| = e < p$ and $G/\tilde{G}$ is a $p$-group, 
$|{\rm IBr}(b({G}))| = e$. In particular if $e_u = e$ where $u \in P$, then $l(b_u) = e$. 

% prop 7
\begin{prop} Let $u \in P$ and assume $e_{u} = e$. The matrix of generalized decomposition numbers of $b(G)$ with respect to a suitable basic set  $\{ \varphi_1^{(u)} = 1_{C_{G}(u)}, \varphi_2^{(u)}, \cdots, \varphi_e^{(u)} \}$ for  $b_u$ 
is of the form : 
  \[ \left.\begin{array}{|c|cccc|c}
\hline
 & \varphi_1^{(u)} = 1_{C_{G}(u)}&\varphi_2^{(u)} &\cdots &\varphi_e^{(u)} \\
\hline
\chi_{1}\lambda_1&\varepsilon_{1}\lambda_1(u)& 0&\cdots &0  \\
\chi_{2}\lambda_2&0&\varepsilon_{2}\lambda_2(u)& 0& 0\\
 \vdots&\vdots &\vdots &\ddots & \vdots  \\
\chi_{e}\lambda_e& 0& 0&\cdots&\ \varepsilon_e \lambda_e(u) \\ 
(\chi_{\nu }\lambda_{\nu})\!\uparrow^G_{G_{\nu}}&
\varepsilon ({\lambda_{\nu}}\!\uparrow^{P}_{P_{\nu}})(u)&\varepsilon ({\lambda_{\nu}}\!\uparrow_{P_{\nu}}^{P})(u) &\cdots &\varepsilon ({\lambda_{\nu}}\!\uparrow^{P}_{P_{\nu}})(u) \\
\hline
\end{array} \right. \]
where $\lambda_i \in {\rm Irr}(P/\tilde{P}) = {\rm Irr}(G/\tilde{G})$, $\nu \in {\cal X}$ and $\lambda_{\nu} \in {\rm Irr}(P_{\nu}/\tilde{P}) = {\rm Irr}(G_{\nu}/\tilde{G})$.  
\end{prop}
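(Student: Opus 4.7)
The plan is to apply the structural results of Section 2 to the pair $(C_G(u),b_u)$ itself. By Lemma 4, $\tilde P_u:=C_{\tilde P}(u)$ is a cyclic $p$-hyperfocal subgroup of $C_G(u)$ of inertial index $e$, so Lemma 5 applied to $b(\tilde C_G(u))$ gives non-exceptional characters $\tilde\chi_1^{(u)},\dots,\tilde\chi_e^{(u)}$ with signs $\varepsilon_j^{(u)}=\pm1$, and Proposition 3 applied inside $C_G(u)$ produces their canonical extensions $\chi_j^{(u)}\in\mathrm{Irr}(b_u)$ to $C_G(u)$. I take the basic set
\[ \varphi_j^{(u)}:=\chi_j^{(u)}\!\downarrow_{C_G(u)_{p'}}\qquad(j=1,\dots,e), \]
after absorbing the local sign $\varepsilon_j^{(u)}$; since $l(b_u)=e$ and the Brauer characters of a cyclic-defect principal block are known from Dade's theory, this is a basic set, with $\varphi_1^{(u)}=1_{C_G(u)}$.

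The crucial step is the $u$-section identity
\[ \chi_i(u\rho)=\varepsilon_i\,\varphi_i^{(u)}(\rho)\qquad(\rho\in C_G(u)_{p'}), \]
identifying the $u$-section of the global canonical extension $\chi_i$ with $\varepsilon_i$ times the local $\varphi_i^{(u)}$. To prove it, pick a $P$-invariant linear $\nu\in\mathrm{Irr}_P(\tilde P)$ whose restriction $\nu':=\nu\!\downarrow_{\tilde P_u}$ is a non-trivial $P_u$-invariant character of $\tilde P_u$ (possible since $\tilde P_u\neq1$), and compare equation (5) applied in $G$ against (5) applied in $C_G(u)$: restricting the global side to $C_G(u)$ and evaluating on the $u$-section, the uniqueness of the canonical extension (Proposition 3 in $(C_G(u),b_u)$) forces each $\chi_i(u\rho)$ to equal $\pm\varphi_i^{(u)}(\rho)$, and the sign must be $\varepsilon_i$ by comparison with $\tilde\chi_i$ via Lemma 5.

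Granting the identity, the rows labelled $\chi_i\lambda_i$ are immediate: since $\rho\in\tilde G\subseteq\ker\lambda_i$, $(\chi_i\lambda_i)(u\rho)=\chi_i(u\rho)\lambda_i(u)=\varepsilon_i\lambda_i(u)\varphi_i^{(u)}(\rho)$, so the unique non-zero entry in this row is $\varepsilon_i\lambda_i(u)$ in column $i$. For the induced rows $\psi:=(\chi_\nu\lambda_\nu)\!\uparrow^G_{G_\nu}$, solve (7) for $\psi$, evaluate at $u\rho$, and invoke the identity: since $G/G_\nu\cong P/P_\nu$ and $\rho\in\tilde G\subseteq G_\nu$, Frobenius gives $(\lambda_\nu\!\uparrow^G_{G_\nu})(u\rho)=(\lambda_\nu\!\uparrow^P_{P_\nu})(u)$ and $((\chi_i\!\downarrow_{G_\nu})\lambda_\nu)\!\uparrow^G_{G_\nu}\!(u\rho)=\varepsilon_i\varphi_i^{(u)}(\rho)(\lambda_\nu\!\uparrow^P_{P_\nu})(u)$, while $(1_G*((\eta_{\hat\nu}\lambda_\nu)\!\uparrow^P_{P_\nu}))(u\rho)=e(\lambda_\nu\!\uparrow^P_{P_\nu})(u)$ using the remark before Proposition 4 that $\eta_{\hat\nu}\equiv e$ on $C_P(T)$ together with $u\in C_P(T)$ from the proof of Lemma 4. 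Collecting terms produces $\psi(u\rho)=\varepsilon(\lambda_\nu\!\uparrow^P_{P_\nu})(u)\sum_{j=1}^e\varphi_j^{(u)}(\rho)$, so every column in this row has the common entry $\varepsilon(\lambda_\nu\!\uparrow^P_{P_\nu})(u)$.

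The principal obstacle is the $u$-section identity: reconciling the global and local canonical-extension constructions, with careful tracking of the signs $\varepsilon_i$ and $\varepsilon_i^{(u)}$. Once that is settled, the rest of the argument is Mackey bookkeeping and a direct application of (7).
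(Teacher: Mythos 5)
Your bookkeeping at the end (the rows $\chi_i\lambda_i$ and the induced rows via equation (7) and Mackey/Frobenius) is fine \emph{granting} your ``$u$-section identity'' $\chi_i(u\rho)=\varepsilon_i\varphi_i^{(u)}(\rho)$, but that identity is exactly the substance of the proposition, and your justification of it does not work. Evaluating (5) on the $u$-section (using $\eta_{\hat\nu}(u)=e$) yields only the single linear relation $\varepsilon\chi_\nu(u\rho)=1+\sum_{i\ge 2}\varepsilon_i\chi_i(u\rho)$, i.e.\ one relation among the generalized decomposition numbers $d^{u}_{\chi_i\varphi_j^{(u)}}$; it cannot separate the individual values $\chi_i(u\rho)$, so nothing ``forces'' each of them to be $\pm$ an irreducible local Brauer character. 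The appeal to the uniqueness of the canonical extension is not available here: Proposition 3 applied inside $C_G(u)$ concerns ordinary characters of $C_G(u)$ extending characters of $O^{p}(C_G(u))$, and gives no link between those and the class function $\rho\mapsto\chi_i(u\rho)$ coming from the \emph{global} character $\chi_i$ --- producing such a link is precisely what computing $d^{u}_{\chi_i\varphi_j^{(u)}}$ means, so the argument is circular at its key step. The proposed sign determination ``by comparison with $\tilde\chi_i$ via Lemma 5'' is also vacuous: for $e_u=e$ one has $u\in C_P(T)$, so $u\notin\tilde P$ and $u\rho\notin\tilde G$ when $u\neq 1$, and formula (4) says nothing about the $u$-section. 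Finally, your claim that the restrictions $\chi_j^{(u)}\!\downarrow_{C_G(u)_{p'}}$ form a basic set for $b_u$ is asserted rather than proved ($b_u$ has non-cyclic defect group $P_u$; one would have to pass to $O^p(C_G(u))$ and use the Brauer-tree shape of its decomposition matrix).

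The paper closes this gap by a quite different mechanism: after reducing to $G=\langle\tilde G,u\rangle=\tilde G\rtimes\langle u\rangle$, it forms $\bar C_G(u)=C_G(u)/\langle u\rangle\cong C_{\tilde G}(u)$, whose principal block has \emph{cyclic} defect group, invokes Brou\'e's Theorems 5.3 and 1.5 to obtain a basic set $Bs(\bar b_u)$ with $\varphi_1^{(u)}=1$ and explicitly known Cartan matrix $mJ+I$ (all off-diagonal entries $m$), lifts this to $b_u$ via $C^{b_u}=|\langle u\rangle|C^{\bar b_u}$, and then combines the orthogonality relation ${}^{t}D\overline D=C^{b_u}$ with the linear relation from (7) to solve an integral matrix equation for $X_{kj}=\varepsilon_k d^{u}_{\chi_k\varphi_j^{(u)}}$; this pins down $X_{kj}=\delta_{kj}$ up to a permutation of the $\varphi_j^{(u)}$ when $m\ge 2$, with a further change of basic set needed in the exceptional case $m=1$. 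Your proposal contains no substitute for this orthogonality/Cartan-matrix argument (the analogue of the quadratic computation in Proposition 6), nor for the case distinction at $m=1$, so as it stands the central claim is unproved.
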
  

\begin{proof} By Lemmas 2 and 3, $P_u = C_{P}(u)$ is a Sylow $p$-subgroup of $C_{G}(u)$, $u \in C_{P}(T)$ for some $p$-complement $T$ of $N$  and $C_{\tilde{P}}(u)$ is a $p$-hyperfocal subgroup of $C_{G}(u)$. Moreover $l(b_u) = e$ as is stated in the above. Set $p^{n'} = |C_{\tilde{P}}(u)|$ and $m = \frac{p^{n'} -1}{e}$. Now for $\nu \in {\cal X}$, the stabilizer $P_{\nu}$ of $\nu$ in $P$ is normalized by $N$. If $u \not\in  P_{\nu}$, $\eta_{\nu}\!\!\uparrow^P_{P_{\nu}}\!\!(u) = 0$ and $\chi_{\nu}\!\!\uparrow^G_{G_{\nu}}\!\!(u \rho) = 0$ for any $\rho \in C_{G}(u)_{p'}$. Recall that if $u \in P_{\nu}$, then $\eta_{\hat{\nu}}(u) = e$ (see (6) for $G_1 = G_{\nu}$). So from (7) in Proposition 4, in order to establish the proposition, it suffices to show for any $\chi_i$ 
\begin{equation}
d_{{\chi_i, \varphi_{j}^{(u)}}}^u = \delta_{ij}\varepsilon_i 
\end{equation}
for a suitable basic set $\{ \varphi_{j}^{(u)} \ | \ 1\leq j \leq e \}$ for $b_u$, where $\varphi_1 ^{(u)} = 1_{C_{G}(u)}$.  Hence we may assume 
\[G = \langle \tilde{G}, u \rangle  = \tilde{G} \rtimes \langle u \rangle. \]
(cf. Lemma 2) Set $\bar{C}_{G}(u) = C_{G}(u)/\langle u \rangle = 
(C_{\tilde{G}}(u) \times \langle u \rangle )/\langle u \rangle \cong C_{\tilde{G}}(u)$, $\bar{b}_u = b(\bar{C}_{G}(u))$ and 
$\overline{C_{P}(u)}  = C_{P}(u)/\langle u \rangle  \cong C_{\tilde{P}}(u)$. Then $\bar{b}_u$ has a cyclic defect group $\bar{P}$, and 
$\bar{b}_u$ has an inertial index $e$. By \cite{Broue}, Theorem 5.3, there is a perfect isometry from $\bar{b}_u$ to $b(N_{\bar{C}_{{G}}(u)}(\bar{P}))$ which maps $1_{\bar{C}_{G}(u)}$ to $1_{N_{\bar{C}_{G}(u)}(\bar{P})}$. Hence by \cite{Broue}, Theorem 1.5, there exists a basic set $Bs(\bar{b}_u) = \{ \varphi_{j}^{(u)}\ | \ 1\leq j \leq e \}$, where $\varphi_{1}^{(u)} = 1_{\bar{C}_{G}(u)}$, such that the Cartan matrix of $\bar{b}_u$ with respect to $Bs(\bar{b}_u)$ is of the form 
\[ {C}^{\bar{b}_u} = \left( \begin{array}{cccc}
{m + 1}&m & \cdots & m\\
m & m+1 &  \cdots &m \\
\vdots&\vdots&\vdots&\vdots\\
m &m & \cdots& m + 1
\end{array} \right)_{e \times e}. \] 
Let $C^{b_u}$ be the Cartan matrix of $b_u$ with respect to $Bs(\bar{b}_u)$ which is regarded as a basic set for $b_u$. We have $C^{b_u} = |\langle u \rangle |C^{\bar{b}_u}$ by \cite{NT}, Theorem 5.8.11. Let 
\[D = \Big( {d}_{\chi {\varphi}_{j}^{(u)}}^{u}\Big)_
{\chi \in {\rm Irr}(b(G)), \varphi_{j}^{(u)} \in Bs(\bar{b}_u)} \]
be the matrix of generalized decomposition numbers of $b(G)$ with respect to $Bs(\bar{b}_u)$. Then we have 
\begin{equation}
C^{b_u} = {^{t}D\overline{D}}. 
\end{equation}

As in the proof of Proposition 6, let ${\cal X}_1 = \{ \nu \in {\cal X} \ | \ \nu \in {\rm Irr}_{P}(\tilde{P}) \}$. We have $|{\cal X}_1| = m$. Let $\nu \in {\cal X}_1$. Recalling  $\eta_{\hat{\nu}}(u) = e$, from (7), 
\[ed_{\chi_1 \varphi_j^{(u)}}^u = (e - 1) d_{\chi_1 \varphi_j^{(u)}}^u - \sum_{k = 2}^{e} \varepsilon_k d_{\chi_k \varphi_j^{(u)}}^u + \varepsilon d_{\chi_{\nu} \varphi_{j}^{(u)}}^{u}, \]
\[\therefore \ \ \ \varepsilon d_{\chi_{\nu} \varphi_j^{(u)}}^u = \sum_{k = 1}^{e}\varepsilon_k d_{\chi_k \varphi_j^{(u)}}^u  \]
for $j = 1, 2, \cdots, e$. Note that these are all integers. Since $0 \not\equiv \chi_{\nu}(1) \equiv \chi_{\nu}(u) \ ({\rm mod} \ {\cal P})$ where 
${\cal P}$ is the maximal ideal of ${\cal O}$, $\chi_{\nu}(u) \neq 0$. In particular $d^{u}_{\chi_{\nu}\varphi_{j}^{(u)}} \neq 0$ for some $j$. The same holds for $\chi_i$. Now set 
\[ X_{kj}  = \varepsilon_k d_{\chi_k \varphi_j^{(u)}}^u \ ( 1 \leq k, j \leq e).   \] 
Then $X_{11} = 1$, $X_{1j} = 0 \ \ (2 \leq j \leq e)$. Moreover $\varepsilon d_{\chi_{\nu} \varphi_j^{(u)}}^u = \sum_{k = 1}^{e}X_{kj}$ for $\nu \in {\cal X}_1$. So we can see from (11), 
\[  \sum_{k = 1}^{e}X_{ki}X_{kj} + m\big(\sum_{k= 1}^{e}X_{ki}\big) \big(\sum_{k= 1}^{e}X_{kj }\big) = \left\{ \begin{array}{c}m + 1 \ (i = j) \\
m \  (i \neq j).  
\end{array} \right. \]
Therefore if $m \geq 2$, then $X_{ij} = \delta_{ij}$ by rearranging  $\varphi^{(u)}_j$ $(j \geq 2)$. But if $m = 1$, by taking another suitable basic set 
$\{ {\varphi'}_{j}^{(u)} \ | \ 1\leq j \leq e \}$ for $b_u$ where ${\varphi'}_{1}^{(u)} = 1_{C_{G}(u)}$ if necessary, 
$\varepsilon_id_{\chi_i {\varphi'}_j^{(u)}}^u = \delta_{ij}$ with respect to
$\{ {\varphi'}_{j}^{(u)} \ | \ 1\leq j \leq e \}$. 
 Thus we get (10). This completes the proof. $\blacksquare$ \end{proof}\\

{\bf Proof of Theorem 1.} It is easily seen that $\tilde{P}$ is a hyperfocal subgroup of $N$ and $N_{G}(\tilde{P})$ by Proposition 2 and Proposition 1. Let $\tilde{N} = O^{p}(N) = N \cap \tilde {G}$ and 
$N_{\nu} = N \cap G_{\nu}$ for $\nu \in {\cal X}$. Then Propositions 3-7 hold for $b(N)$. In fact there exist $e$ irreducible characters $\chi'_i \in b(N)$ and  $\chi'_{\nu}$ of $b(N_{\nu})$ corresponding to $\nu \in {\cal X}$  such that 
for $\lambda_{\nu} \in ${\rm Irr}$(P_{\nu}/\tilde{P}) = ${\rm Irr}$(N_{\nu}/\tilde{N})$, 
\[1_{N_{\nu}} * (\eta_{\hat{\nu}}\lambda_{\nu}) = (e - 1)\lambda_{\nu} - 
\sum_{i = 2}^{e} ({\chi'_i}\!\downarrow_{N_{\nu}}\lambda_{\nu}) + \ \chi'_{\nu}\lambda_{\nu}, \]
and hence 
\[
1_{N} * ((\eta_{\hat{\nu}} \lambda_{\nu})\uparrow^P_{P_{\nu}}) = (e - 1)(\lambda_{\nu}\!\uparrow_{N_{\nu}}^{N}) - \sum_{i = 2}^{e}({\chi'_i \!\downarrow_{N_{\nu}}\!\lambda_{\nu}})\!\uparrow_{N_{\nu}}^{N} + \ ({\chi'_{\nu}\lambda_{\nu}})\!\uparrow_{N_{\nu}}^{N}. \]  
By Proposition 5 for $N$, 
 ${\rm Irr}(b(N)) = \bigcup_{i = 1}^e \Big\{ \chi'_i \lambda \ | \  \lambda \in {\rm Irr}(N/\tilde{N}) \Big\} \bigcup_{\nu \in {\cal X}} \Big\{ (\chi'_{\nu} \lambda_{\nu})\!\uparrow_{N_{\nu}}^{N} \ | \ \lambda_{\nu} \in {\rm Irr}(N_{\nu}/\tilde{N}) \Big\}$ and by Theorem 2, 
\[ \left\{ \begin{array}{l}
\chi'_i \lambda \mapsto \varepsilon_{i} \chi_i \lambda \\
(\chi'_{\nu}\lambda_{\nu})\uparrow^{N}_{N_{\nu}} \mapsto \varepsilon (\chi'_{\nu}\lambda_{\nu})\!\uparrow^{G}_{G_{\nu}}
\end{array} \right. \]
gives a perfect isometry from $b(N)$ to $b(G)$. Similarly $b(G)$ and $b(N_{G}(\tilde{P}))$ also are perfect isometric. $\blacksquare$ \\

\vspace{20pt}
{\small
{\sc National Institute of Technology, Kumamoto College, 2659-2, Suya, Koushi, Kumamoto 861-1102, Japan}

{\it E-mail} : {\tt hori@kumamoto-nct.ac.jp}

\vspace{10pt}

{\sc Kumamoto University, 1-16-37, Toroku, Chuo-ku, Kumamoto 862-0970, Japan}

{\it E-mail} : {\tt qqsh2tn9kq@io.ocn.ne.jp}
}

\end{document}